\documentclass{amsart}
 
\usepackage{mystyle}
\title{A class of $2$-local finite spectra which admit a $v_2^1$-self-map}
\author[Prasit B.]{Prasit Bhattacharya$^{1,*}$}
\address{$^1$Department of Mathematics, University of Virginia, 131 Kerchof Hall, Charlottesville, VA 22904}
\address{$^1$Tel: +1(434)924-4919}
\address{$^*$Corresponding author}
\email{$^1$pb9wh@virginia.edu}
\author[P. Egger]{Philip Egger$^2$}
\address{$^{2}$Hummel Lab, Campus Biotech, Swiss Federal Institute of Technology Lausanne (EPFL), 1201 Geneva, Switzerland}
\address{$^2$Tel: +41(21)695-5040}
\email{$^2$philip.egger@epfl.ch}

\begin{document}

\maketitle 
\tableofcontents

\begin{abstract}
At the prime $2$, Behrens, Hill, Hopkins and Mahowald showed that $M_2(1,4)$ admits a $32$-periodic $v_2$-self-map. More recently, in joint work with Mahowald, we showed that $A_1$ also admits a $32$-periodic $v_2$-self-map. This leads to the question of whether there exists a finite $2$-local complex with periodicity less than $32$. We answer this question in the affirmative by producing a class of finite $2$-local spectra $\ZZ$ all of which admit a $1$-periodic $v_2$-self-map.
\end{abstract}
\begin{center}Keywords: stable homotopy, $v_2$-periodicity \end{center}
\section{Introduction} \label{Sec:intro} 
Let $\mathcal{C}_0$ be the category of $p$-local finite spectra, where $p$ is a fixed prime. A $v_n$-self-map of an object $X$ of $\mathcal{C}_0$ is a self-map $v: \Sigma^t X \to X$
such that
\[ K(n)_*v : K(n)_*X \to  K(n)_*X \]
is an isomorphism. Here $K(n)$ is the $n$-th Morava $K$-theory and it is well-known that $K(n)_*$ is the graded ring $\F_p[v_n, v_n^{-1}]$ with $|v_n| = 2p^{n}-2$. Since $K(n)_*$ is a graded field (i.e. every nonzero homogeneous element has a multiplicative inverse), $K(n)_*X$ is a graded vector space over $K(n)_*$ and the isomorphism $K(n)_*v$, up to a change of basis, is multiplication by a nonzero element of $K(n)_*$. We say that a $v_n$-self-map $v$ has periodicity $k$ if $k$ is the smallest integer such that $K(n)_*v$ induces multiplication by $v_n^k$ (in which case $t = (2p^{n}-2)k$). We will refer to a $v_n$-self-map of periodicity $k$ as a $v_n^k$-self-map.

In $1998$, Hopkins and Smith showed in \cite{HS} that for every $n\geq 0$, $\mathcal{C}_{n}$, the category of $K(n-1)$-acyclics, is a thick subcategory of $\mathcal{C}_0$ and the $\mathcal{C}_{n}$ form a sequence of thick subcategories 
\[ \mathcal{C}_0 \supset \mathcal{C}_1 \supset \mathcal{C}_2 \supset \dots \supset \mathcal{C}_{\infty}\]
where $\mathcal{C}_{\infty}$ is the category of contractible spectra. A $p$-local finite spectrum $X$ is said to be of type $n$ if $X \in \mathcal{C}_n \setminus \mathcal{C}_{n+1}$. They also showed that 
\begin{thm}[Hopkins-Smith] \label{thm:HS1}Every $p$-local finite spectrum $X$ of type $n$ admits a $v_n$-self-map  
\[v: \Sigma^{k(2p^n-2)}X \to X.\]
Moreover, the cofiber $Cv$ is a spectrum of type $n+1$. 
\end{thm}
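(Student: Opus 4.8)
This is the \emph{Periodicity Theorem} of Hopkins and Smith, and I would reprove it along their original lines, taking as inputs the Nilpotence Theorem (a self-map of a finite $p$-local spectrum is nilpotent if and only if it is nilpotent on $BP$-homology) and the Thick Subcategory Theorem recalled above. The argument has three movements: an \emph{asymptotic uniqueness} statement for $v_n$-self-maps; a \emph{thick-subcategory reduction} of the existence claim to a single, conveniently chosen type $n$ complex; and the \emph{construction} of a $v_n$-self-map on that complex. The final sentence of the theorem will then be a short Morava $K$-theory computation.

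\emph{Asymptotic uniqueness and the reduction.} The foundation is algebraic: using the Landweber filtration of $BP_*X$ and the fact that $v_n$ is a comodule primitive modulo $I_n$, one shows that any comodule endomorphism of $BP_*X$ that is an isomorphism on $v_n^{-1}BP_*X/I_n$ becomes, after iteration, a power of $v_n$; in particular it dies in $K(m)_*$ for $m>n$. Feeding this into the Nilpotence Theorem gives the topological statement: if $u\colon\Sigma^iX\to X$ and $w\colon\Sigma^jX\to X$ are $v_n$-self-maps of a type $n$ complex, then $u^a\simeq w^b$ for some $a,b>0$ with $ai=bj$ (and $v_n$-self-maps are asymptotically central) — because, after matching degrees by iteration and absorbing the finite group of degree-preserving automorphisms of $K(n)_*X$, the difference $u^a-w^b$ induces zero on $K(m)_*X$ for $m\le n$ and a nilpotent endomorphism for $m>n$, hence is nilpotent on $BP$-homology. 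With asymptotic uniqueness in hand, the class $\mathcal D$ of finite spectra of type $\ge n$ admitting a positive-degree $v_n$-self-map is thick: closure under suspension and retracts is immediate, and for a cofiber sequence $X\xrightarrow{f}Y\to Z$ with $X,Y\in\mathcal D$, matching degrees by iteration, both $\mathrm{diag}(v_X,v_Y)$ and its lower-triangular modification $\bigl(\begin{smallmatrix}v_X&0\\ g&v_Y\end{smallmatrix}\bigr)$ — where $g=v_Yf-fv_X$ records the noncommutativity — are $v_n$-self-maps of $X\vee Y$, so a common iterate of the two agrees, which gives $fv_X^a\simeq v_Y^af$ and hence, by the cofiber sequence and the five lemma, a $v_n$-self-map of $Z$. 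Since the thick subcategory generated by any one type $n$ complex is all of $\mathcal C_n$, it now suffices to produce a single type $n$ complex carrying a $v_n$-self-map.

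\emph{The single example.} I would induct on $n$ (the cases $n=0,1$ being multiplication by $p$ on $S^0$ and the Adams self-map on the mod $p$ Moore spectrum). Granting the theorem below height $n$, one builds iterated cofibers $X=M(i_0,\dots,i_{n-1})$ of $v_j^{i_j}$-self-maps — generalized Moore spectra, whose very construction uses the lower-height cases together with the lemmas above — which have type $n$ with $BP_*X$ resolved, via a Landweber filtration, by copies of $BP_*/I_n$. For a judicious choice of $X$ one runs a $BP$-based Adams spectral sequence abutting to $[X,X]_*$, with $E_2$-term $\mathrm{Ext}_{BP_*BP}(BP_*,BP_*(DX\wedge X))$. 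Two things meet here: an algebraic $v_n$-self-map on $BP_*X$, namely multiplication by a suitable power $v_n^{p^k}$ (which exists because $v_n$ is a comodule primitive modulo $I_n$), supplying a candidate permanent cycle in Adams filtration $0$; and a \emph{vanishing line}, a line in the $(t-s,s)$-plane above which $E_2$ is concentrated, together with a parallel line above which every class is $v_n$-periodic. The self-map is the realization of the algebraic one: passing to a higher power of $v_n$ pushes the targets of its hypothetical Adams differentials up into the periodic region, where they are pinned down, and the Nilpotence Theorem shows the residual lower-chromatic obstruction classes must vanish. The data produced at height $n$ — the complex, its self-map, and the vanishing-line estimate — then feed height $n+1$. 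I expect the \textbf{main obstacle} to be exactly this vanishing line with the \emph{correct slope}, interlocked with the inductive construction of the $M(i_0,\dots,i_{n-1})$: the slope must decrease quickly enough under iteration of the self-map while staying controlled as the chromatic height grows. The rest is comparatively formal once the Nilpotence and Thick Subcategory Theorems are granted.

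\emph{The cofiber.} For a $v_n$-self-map $v\colon\Sigma^dX\to X$, apply $K(m)_*$ to the cofiber sequence $\Sigma^dX\xrightarrow{v}X\to Cv$. For $m<n$ we have $K(m)_*X=0$, and for $m=n$ the map $K(n)_*v$ is an isomorphism, so $K(m)_*Cv=0$ for all $m\le n$ and $Cv$ has type $\ge n+1$. For $m=n+1$: the image of $v_n$ in $K(n+1)_*=\F_p[v_{n+1}^{\pm1}]$ is $0$, so $K(n+1)_*v$ is nilpotent (as $v$ is asymptotically a power of $v_n$ on $BP$-homology); since $X$ has type $n$ and the $\mathcal C_k$ are nested as recalled above, $K(n+1)_*X\ne 0$, and a nilpotent endomorphism of the nonzero graded $K(n+1)_*$-vector space $K(n+1)_*X$ cannot be an isomorphism, so $K(n+1)_*Cv\ne 0$. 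Hence $Cv$ is of type exactly $n+1$.
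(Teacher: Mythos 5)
The paper does not prove this theorem; it is the Periodicity Theorem, recalled from Hopkins--Smith [HS] as standing input, so there is no internal proof to compare against. Measured against the actual Hopkins--Smith argument, your three-movement sketch is structurally the right one: asymptotic uniqueness (and centrality) of $v_n$-self-maps, thickness of the class of type-$\geq n$ finite spectra carrying one, reduction via the Thick Subcategory Theorem to a single example, and construction of that example via a vanishing line; the cofiber computation at the end is correct. Two cautions on the sketch. First, the original construction of the single example is run in the classical mod-$p$ Adams spectral sequence, not a $BP$-based one: the Smith construction and the Steinberg idempotent are used to produce a generalized Moore spectrum whose mod-$p$ cohomology is free over a large enough $A(k)$, which gives $\mathrm{Ext}_A$ a vanishing line steep enough that a class realizing $v_n^{p^j}$ on the line is a permanent cycle for degree reasons. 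You flag the slope of the vanishing line as the ``main obstacle,'' and rightly so, but you should be aware that the $BP$-based variant you propose is not the standard route and it is not clear it delivers a vanishing line with the needed slope or the needed convergence. Second, in the cofiber step you assert that $K(n+1)_*v$ is nilpotent; with the definition of a $v_n$-self-map used in this paper (only $K(n)_*v$ required to be an isomorphism, with no nilpotence condition at other heights), this is not a hypothesis but a theorem, and it is exactly the output of the asymptotic-centrality/uniqueness statement from your first movement. Your final paragraph therefore consumes your first one, which is fine, but the dependence deserves to be stated explicitly.
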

Not only does Theorem~\ref{thm:HS1} show the existence of $v_n$-self-maps, but it also provides a recipe for constructing type $n$ spectra. However, \cite{HS} does not shed any light on the minimal periodicity of such a $v_n$-self-map, except to establish that the minimal periodicity is always a power of $p$. 

One of the key properties of a $v_n$-self-map $v:\Sigma^t X \to X$ is that the iterated compositions 
\[ v^{\circ r}:=\Sigma^{(r-1)t} v \circ \dots \circ \Sigma^tv \circ v: \Sigma^{rt}X \to X\]
are homotopically nontrivial and potentially give us an infinite family of elements in the stable homotopy groups of spheres. Typically,  $v^{\circ r}$  composed with the inclusion of a bottom cell
\[ 
\xymatrix{
\tau_r: S^{rt} \ar@{^(->}[r]^{\mathit{incl}} &\Sigma^{rt}X \ar[r]^{ v^{\circ r}}& X 
}
\]
is nontrivial. Therefore, the map $\tau_r$ factors through some skeleton, say $X^{\langle n_r \rangle}$, in such a way that the composite with the pinch map to a top cell of $X^{\langle n_r \rangle}$
\[ 
\xymatrix{
\sigma_r: S^{rt} \ar[r]^{\tau_r} & X^{\langle n_r \rangle} \ar[r]^{\text{pinch}} & S^{n_r} 
}
\]
is a nontrivial element of $\pi_{rt -n_r}(S^0)$. The collection of such $\lbrace \sigma_r: r > 0 \rbrace$ forms an infinite family. The smaller the periodicity of $v$, the smaller the gap in degree between successive elements in the family. Hence the interest is in 
\begin{itemize}
\item finding the minimal periodicity of the $v_n$-self-map on a given type $n$ finite spectrum, and
\item finding finite $p$-local spectra whose $v_n$-self-maps have periodicity as low as possible.
\end{itemize}

Recall that $K(0) = H\mathbb{Q}$ and $v_0$ is just multiplication by $p$. The sphere spectrum $S^0$ is a type $0$ spectrum which admits a $v_0^1$-self-map. Since $S^0$ admits a $v_0^1$-self-map, any type $0$ spectrum admits a $v_0^1$-self-map. 

The search for $v_n$-self-maps gets increasingly complicated as $n$ increases. First, we remind ourselves some of the standard notations used in the literature. The cofiber of the $v_0^{i_0}$-self-map of $S^0$, i.e. multiplication by $p^{i_0}$, is called the $i_0$-th Moore spectrum at the prime $p$ and is denoted by $M_p(i_0)$. By Theorem~\ref{thm:HS1}, $M_p(i_0)$ must admit a $v_1$-self-map of some periodicity and the cofiber of 
\[ v_1^{i_1}:\Sigma^{i_1(2p-2)}M_p(i_0) \to M_p(i_0)\]
is denoted by $M_p(i_0,i_1)$. In general, the cofiber of 
\[ v_n^{i_n}:\Sigma^{i_n(2p^n-2)}M_p(i_0, \dots, i_{n-1}) \to M_p(i_0, \dots, i_{n-1})\]
is denoted by $M_p(i_0,\dots, i_{n-1},i_n)$ and called a generalized Moore spectrum. Often in the literature a generalized Moore spectrum $M_p(i_0,\dots, i_n)$ with $i_k =1$ for $0 \leq k \leq n$ is called a Smith-Toda complex and is denoted by $V_p(n)$. Alternatively, one can define the spectrum $M_p(i_0, \dots, i_n)$ as a topological realization of the $BP_*$-comodule 
\[ BP_*/\langle v_0^{i_0}, v_1^{i_1}, \dots, v_n^{i_n} \rangle. \]

Generalized Moore spectra may not exist for all sequences $(i_0, \ldots, i_n)$, and even if such a spectrum exists, it may not be unique due to the potential non-uniqueness of the self-maps. Toda \cite{Toda} showed that $V_p(1)=M_p(1,1)$ exists for $p \geq 3$, $V_p(2)= M_p(1,1,1)$ exists for $p \geq 5$ and $V_p(3) = M_p(1,1,1,1)$ exists for $p \geq 7$. In $1966$, J.F. Adams proved in \cite{Adams} that $M_2(1)$ does not admit a $v_1^1$-self-map, in fact the minimal periodicity of a $v_1$-self-map on $M_2(1)$ is $4$. Thus, $M_p(1, i)$ does not exist for $i<4$. In $2003$, Behrens and Pemmaraju \cite{BP} showed that $V_3(1) = M_3(1,1)$ admits a $v_2$-self-map of minimal periodicity $9$. Therefore $M_3(1,1,i)$ does not exist for $i<9$. In $2008$, Behrens, Hill, Hopkins and Mahowald \cite{BHHM} showed that the $v_2$-self-map of $M_2(1,4)$ has minimal periodicity $32$. Little is known about $v_n$-self-maps for $n \geq 3$ aside from the work of Toda mentioned above, and Nave's proof in \cite{Nave} of the nonexistence of $V_p(\frac{p+1}{2})$  for $p > 7$.

Instead of focusing on generalized Moore spectra one can also ask the following question: 
\begin{ques}For a fixed prime $p$, what is the type $n$ spectrum whose $v_n$-self-map has the smallest periodicity?\end{ques}
For instance, at the prime $2$, we have seen that $M_2(1)$ does not admit a $v_1^1$-self-map. However, it is known that $Y:=M_2(1) \sma C\eta$ admits eight $v_1^1$-self-maps (see~\cite{DM81}). At the prime $3$, Behrens and Pemmaraju \cite{BP} showed that $M_3(1,1)$ does not admit a $v_2^1$-self-map. However, they also proved that $M_3(1,1) \sma Y(2)$, where $$Y(2) = S^0 \cup_{\alpha_1} S^4 \cup_{2\alpha_1}S^8,$$ admits a $v_2^1$-self-map. Toda proved that $M_p(1,1)$ admits a $v_2$-self-map for $p \geq 5$ and that $M_p(1,1,1)$ admits a $v_3^1$-self-map for $p \geq 7$.

Though $M_2(1,4)$ had a $v_2^{32}$-self-map, the authors hoped that one of the cofibers of the $v_1^1$-self-maps on $Y$, collectively referred to as $A_1$, might have a $v_2$-self-map of periodicity less than $32$. In joint work with Mark Mahowald (see \cite{BEM}), we showed that this does not occur, as all of the spectra $A_1$ have $v_2^{32}$-self-maps. This led to the question of whether there exists any $2$-local type $2$ spectrum with a $v_2$-self-map of periodicity less than $32$. We answer this question in the affirmative by producing a class of finite spectra that admit $v_2^1$-self-maps. The main purpose of this paper is to prove the following theorem. 
\begin{main} \label{main:v2}There is a collection of $2$-local type $2$ spectra $\ZZ$, such that every $Z \in \ZZ$ admits a $v_2$-self-map of periodicity $1$. 
\end{main} 
For the rest of the paper, we will work in the stable homotopy category of $2$-local spectra. Let $A$ denote the mod $2$ Steenrod algebra and $A(n)$ be the subalgebra of $A$ generated by $\lbrace Sq^{2^i}: 1 \leq i \leq n \rbrace$. Let $Q_n$ for $n\geq 0$ be the $n$-th Milnor element in $A$, iteratively constructed using the formula
\[Q_0=Sq^1, Q_n = [Sq^{2^n},Q_{n-1}] = Sq^{2^n}Q_{n-1} - Q_{n-1}Sq^{2^n}.\]
The Milnor element $Q_n$ generates an exterior algebra $E(Q_n)$ as  $Q_n^2 =0$ and it commutes with every $a \in A(n)$. The exterior algebra $E(Q_n)$ is a normal subalgebra of $A(n)$ and the pushout
\[A(n)\modmod E(Q_n) = A(n) \otimes_{E(Q_n)}\Ft = \Ft \otimes_{E(Q_n)} A(n)\]
is an $A(n)$-module, in fact, it is an $A(n)$-algebra. Let $B(n)$ denote the $A(n)$-algebra $A(n)\modmod E(Q_n)$ and 
\[ q_n: A(n) \to B(n) \]
denote the ``quotient'' map.

\begin{defn} \label{defn:Z}
The class $\ZZ$ is the collection of all \emph{finite} spectra $Z$ such that there is an isomorphism of $A(2)$-modules
\[ H^*(Z) \iso B(2).\]
\end{defn}
\begin{rem} It is worth pointing out that the finiteness criterion in Definition~\ref{defn:Z} is essential. Note that for a finite $2$-local spectra $X$, Bousfield localization with respect to $M_2(1)$ is isomorphic to the localization with respect to $H\mathbb{F}_2$ (see \cite{Bou}), i.e. $X = X_{M_2(1)} \simeq X_{H \mathbb{F}_2}$. In all arguments involving the Adams spectral sequence, including the proof of Main Theorem~\ref{main:v2}, we rely on the assumption that every $Z\in\ZZ$ satisfies $Z\simeq Z_{H\mathbb{F}_2}$. If we dropped the finiteness criterion from Definition~\ref{defn:Z}, then one could find spectra $X\in\ZZ$, for example $X=Z \vee K(2) \in \ZZ$, for which $H^*(X)\iso B(2)$, but $X \ncong X_{H\mathbb{F}_2}$. Such an $X$, while an element of $\ZZ$, would not be of type 2, nor would the proof of Main Theorem~\ref{main:v2} be correct in its case.
\end{rem}

\begin{notn}Any $A$-module which restricts to $A(2)$ as an $A(2)$-module will be denoted $A_2$. Likewise, any $A$-module which restricts to $B(2)$ as an $A(2)$-module will be denoted $B_2$.\end{notn}
Definition~\ref{defn:Z} is motivated by the fact that the cohomology of the spectrum $Y$, which admits a $v_1^1$-self-map, is 
\[ H^*(Y) = B(1).\]
To show that the class $\ZZ$ is nonempty, we first enrich the $A(2)$-module $B(2)$ to an $A$-module $B_2$. We do this by enriching the $A(2)$-module $A(2)$ to an $A$-module $A_2$ and taking $B_2$ to be the image of $q_2$. As we will point out in Remark \ref{1600structures} and expand upon in the appendix, there are many different $A$-modules $A_2$, thus there are potentially many different $A$-modules $B_2$. We will then topologically realize the $B_2$ as cohomologies of spectra (see Theorem~\ref{thm:todaR2}).

There is yet another way of obtaining spectra in the class $\ZZ$. It is known \cite[Lemma~$6.1$]{HM} that there exists a nontrivial self-map 
\[ \gamma:\Sigma^5 C\eta \sma C\nu \to C \eta \sma C \nu ,\]
where  $\eta$ and $\nu$ are the well-known Hopf maps in $\pi_{*}(S^0)$. The map $\gamma$ has multiple lifts  
\[ w: \Sigma^{5}A_1 \sma C\nu \to A_1 \sma C\nu\]
whose cofibers $Cw$ belong to the class $\ZZ$. This approach to producing $Z \in \ZZ$ is described in the second author's doctoral thesis \cite[Chapter~$3$]{E}. The authors believe that any spectrum in the class $\ZZ$ can be obtained as a cofiber of such a degree $5$ self-map of $A_1 \sma C\nu.$

Given such a $B_2$, we see that the spectra realizing it are not unique, even up to homotopy. Depending on the specific $B_2$ we choose, there are either $4$ or $8$ different homotopy classes of spectra that realize $B_2$ (see Theorem~\ref{thm:nonunique}). 

Let $k(n)$ denote the connected cover of the $n$-th Morava $K$-theory. Lellmann \cite{Lel} proved that there exists an isomorphism of $A$-modules 
\[ H^{*}(k(n)) \iso A \modmod E(Q_n).\]  
Hopkins and Mahowald \cite{HM} showed that as an $A$-module  
\[ H^*(\mathit{tmf}) \iso A\modmod A(2).\]
 Since every $Z \in \ZZ$ is a realization of $A(2)\modmod E(Q_2)$, we have 
\[ H^{*}(\mathit{tmf} \sma Z) \iso A\modmod A(2) \otimes A(2) \modmod E(Q_2) \iso A \modmod E(Q_2) \iso H^*(k(2)).\]
As a result, any spectrum $Z \in \ZZ$ satisfies the relation
\begin{equation} \label{eqn:Zspecial} 
\mathit{tmf} \sma Z \simeq k(2).
\end{equation}
Thus $Z$ can be thought of as the height $2$ analogue of the spectrum $Y$ because  
\[ \mathit{ko} \sma Y \simeq k(1).\]
As discussed earlier, $Y$ admits a $v_1^1$-self-map. Main~Theorem~\ref{main:v2} produces a $v_2^1$-self-map of $Z$ which further extends the analogy between $Y$ and $Z$.

Understanding the $ko$-resolution of $Y$ (see \cite{M1} and \cite{M2}) results in the proof of the telescope conjecture at chromatic height $1$ at the prime $2$. By analogy, we hope that the $\tmf$-resolution of $Z$ will enable us to attack the telescope conjecture at chromatic height $2$ at the prime $2$. Indeed, the authors have computed a close approximation of the ``easier'' side of the telescope conjecture, namely $ \pi_*(L_{K(2)}Z)$, for any $Z \in \ZZ$. Part of this computation also appears in \cite{E}, with a more detailed report to appear in \cite{BE}.
\subsection*{Organization of the paper} 
In Section~\ref{SEC:Amodule} we show that every $A$-module $B_2$ is ``half'' of a corresponding $A_2$, in that there is a short exact sequence of $A$-modules\[0\to\Sigma^7B_2\to A_2\to B_2\to0.\]

In Section~\ref{SEC:TodaRealize} we recall a criterion of Toda for realizing a given $A$-module as the cohomology of a spectrum, and give a proof of a more refined criterion. In the process we review the construction of Adams towers and dual Adams towers, which will be necessary in Section~\ref{SEC:nonuniqueness}.

In Section~\ref{SEC:RealizeZ} we show that all $A$-modules $B_2$ satisfy Toda's criterion and can thus be topologically realized. However, in Section~\ref{SEC:nonuniqueness}, we show that all of these realizations are non-unique; given an $A$-module $B_2$, there are, up to homotopy, either $4$ or $8$ different spectra that realize it. 

Finally, in Section~\ref{SEC:mainproof} we complete the proof of Main~Theorem~\ref{main:v2}. 

We provide Appendix~\ref{SEC:Appendix1} to show how to obtain $A$-module structures on $B(2)$ in practice. We obtain an explicit $A$-module and display it in the format required by Bruner's Ext program \cite{Bru}. We also display various Ext charts obtained by running this program.

\section*{Acknowlegments}
The authors would like to thank Mark Behrens, Paul Goerss and Mike Mandell for their invaluable assistance and encouragement throughout this project. We would like to thank Irina Bobkova and Nicolas Ricka discussions helpful toward formulating \eqref{cond:A}. We are also indebted to Bob Bruner for his Ext calculator program. While none of the results in this paper rely on computer-assisted proofs, computer-assisted calculations have provided many of the insights in the paper. Finally, we are grateful to Alex Kruckman for making available online an Adem relations calculator, which was very handy for our purposes.

\section{$A$-module structures on $B(2)$} \label{SEC:Amodule}
Let $Q_n$ be the Milnor element of $A$, let $M$ be a left $A(m)$-module for $m\geq n$ and let
\begin{eqnarray*}\mathcal{Q}_n^R:\Sigma^{2^{n+1}-1}M&\to&M\\x&\mapsto&xQ_n\end{eqnarray*}
be the multiplication by $Q_n$ on the right. Adams and Margolis \cite{AM} used the property $Q_n^2 = 0$ to define the Adams-Margolis homology 
\[ H(M; Q_n) = \frac{\ker \mathcal{Q}_n^R}{\img\mathcal{Q}_n^R}.\]
When $M = A(n)$, the right action of $Q_n$ is same as the left action of $Q_n$ as $Q_n$ lies in the center of $A(n)$. Hence we can consider the map $\mathcal{Q}_n$ of multiplication by $Q_n$ on the left or the right. Note that $\coker \mathcal{Q}_n$ is isomorphic to $B(n)$. It can be easily checked that $H(A(n);Q_n) =0$, therefore $\ker \mathcal{Q}_n \iso \img \mathcal{Q}_n$. Moreover, the induced map $\widetilde{\mathcal{Q}}_n$
\[ \xymatrix{
A(n) \ar[drr]^<(.53){\mathcal{Q}_n \ \ \ } \ar@{->>}[r] & \coker \mathcal{Q}_n \ar@{-->}[d]_<(.54){ \ \ \widetilde{\mathcal{Q}}_n}\ar[dr]\\
& \Sigma^{-7}\img \mathcal{Q}_n \ar@{^(->}[r] & \Sigma^{-7}A(n) \\
}\]
is in fact an isomorphism of $A(n)$-modules. As a result (also see \cite{Mit}) we have the short exact sequence of $A(n)$-modules

\[ 0 \to \Sigma^{2^{n+1}-1}B(n) \iso \img\mathcal{Q}_n  \to A(n) \overset{q_n}\to B(n)\iso \coker \mathcal{Q}_n \to 0.\]


Now we restrict our attention to $n=2$ and consider the short exact sequence of $A(2)$-modules
\[ 0 \to \Sigma^{7}B(2) \to A(2) \overset{q_2}\to B(2) \to 0.\]
Since this short exact sequence is not split, it corresponds to a nontrivial element 
\[ \tilde{v}_2 \in Ext^{1,7}_{A(2)}(B(2),B(2)).\]
Let $B_2$ denote an arbitrary left $A$-module whose underlying $A(2)$-module structure is $B(2)$. In Theorem~\ref{thm:E2} we argue that $\tilde{v}_2$ lifts to an element 
\[ \overline{v}_2 \in Ext_{A}^{1,7}(B_2,B_2).\]
Thus there exists a short exact sequence of left $A$-modules 
\begin{equation} \label{eqn:exactA}
 0 \longrightarrow \Sigma^{7}B_2 \overset{i_2}\to A_2 \overset{q_2}\to B_2 \to 0,
\end{equation}
where  the underlying $A(2)$-module structure of the $A$-module $A_2$, is free over one generator in degree $0$.

Before proving Theorem~\ref{thm:E2}, we indulge ourselves in some preliminary computations of certain $Ext$ groups. Let $\Gen$ denote a basis for $B(2)$ as a graded $\Ft$-vector space. Observe that, as an $E(Q_2)$-module 
\[ B(2) \iso \bigoplus_{c \in \Gen} \Sigma^{|c|}\Ft, \]
therefore
\[ DB(2) \otimes B(2) \iso \bigoplus_{c \in D\Gen \times \Gen} \Sigma^{|c|}\Ft,\]
where $D\Gen$ is the basis of $DB(2)$ dual to $\Gen$. Consequently, 
\[ Ext^{*,*}_{E(Q_2)}(B(2),B(2)) \iso \Ft[v_2] \otimes \left(\bigoplus_{c \in D\Gen \times \Gen} \Sigma^{|c|}\Ft\right), \]
where $v_2$ is the image of the periodicity generator of $Ext_{E(Q_2)}^{*,*}(\Ft, \Ft)$ in bidegree $(s,t)=(1,7)$ induced by the unit map 
\[\iota:DB(2) \otimes B(2) \to \Ft.\] 
One can use a change of rings isomorphism to see that 
\[ Ext^{*,*}_{A(2)}(B(2),B(2)) \iso Ext^{*,*}_{E(Q_2)}(\Ft,B(2)) \iso \Ft[v_2] \otimes \left(\bigoplus_{c \in D\Gen} \Sigma^{|c|}\Ft\right).\]
We summarize the above discussion with the following lemma. 
\begin{lem} \label{lem:computations}Let $\Gen$ be a basis for $B(2)$ as a graded $\Ft$-vector space, $D\Gen$ be the corresponding basis for the dual $DB(2)$ and $\iota_0 \in \Gen$ be the unique generator in degree $0$. Then we have isomorphisms      
\begin{enumerate} 
\item \[Ext^{*,*}_{E(Q_2)}(B(2), B(2)) \iso \Ft[v_2] \otimes \left(\bigoplus_{c \in D\Gen \times \Gen} \Sigma^{|c|}\Ft\right),\]
\item \[Ext^{*,*}_{A(2)}( B(2), B(2))  \iso \Ft[v_2] \otimes \left(\bigoplus_{c \in D\Gen} \Sigma^{|c|}\Ft\right),\]
\end{enumerate}
and the inclusion of $E(Q_2)$ into $A(2)$ induces the $v_2$-linear map
\begin{eqnarray*}l:Ext^{s,t}_{A(2)}( B(2), B(2)) &\to& Ext^{s,t}_{E(Q_2)}( B(2), B(2))\\  
\overline{g} &\mapsto&  (\overline{g}, \iota_0), \end{eqnarray*}
for every $\overline{g} \in D\Gen$.
\end{lem}

\begin{thm} \label{thm:E2} Let $B_2$ denote any $A$-module which restricts as an $A(2)$-module to $B(2)$. Then there exists an element  $\overline{v}_2\in Ext_{A}^{1,7}( B_2,B_2)$ which maps to $\tilde{v}_2$ under the map 
\[ k: Ext_{A}^{s,t}(B_2,B_2) \to Ext_{A(2)}^{s,t}(B(2),B(2)). \]
\end{thm}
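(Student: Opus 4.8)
The plan is to produce $\overline{v}_2$ by a descent/obstruction argument, passing between the Steenrod algebra $A$, its subalgebra $A(2)$, and the exterior subalgebra $E(Q_2)$, using the explicit $\mathrm{Ext}$ computations of Lemma~\ref{lem:computations}. First I would observe that $\tilde v_2 \in \mathrm{Ext}^{1,7}_{A(2)}(B(2),B(2))$ is, by Lemma~\ref{lem:computations}(2), exactly the class corresponding to the degree-$0$ dual generator $\overline{\iota}_0 \in D\Gen$ (it is the unique nonzero class in bidegree $(1,7)$, since the only $c \in D\Gen$ with $|c| = 0$ is $\iota_0$, and $v_2$ itself has bidegree $(1,7)$); indeed $\tilde v_2$ must be the periodicity class $v_2$, because the short exact sequence $0 \to \Sigma^7 B(2) \to A(2) \to B(2) \to 0$ becomes, after restriction to $E(Q_2)$, a sum of shifted copies of $0 \to \Sigma^7\mathbb{F}_2 \to E(Q_2) \to \mathbb{F}_2 \to 0$, whose class is the generator of $\mathrm{Ext}^{1,7}_{E(Q_2)}(\mathbb{F}_2,\mathbb{F}_2)$. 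So the image $l(\tilde v_2)$ in $\mathrm{Ext}^{1,7}_{E(Q_2)}(B(2),B(2))$ is $v_2 \cdot (\overline{\iota}_0,\iota_0)$, a nonzero permanent class.

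Next I would set up the relevant long exact sequences. The extension $A$ is free as an $A(2)$-module (resp.\ as an $A$-module in the universal case) over a degree-$0$ generator in a suitable range; more precisely I would use the algebraic analogue of the Adams-tower machinery reviewed in Section~\ref{SEC:TodaRealize} together with a change-of-rings argument: since $A$ is free over $A(2)$, there is a base-change spectral sequence / edge map relating $\mathrm{Ext}_A(B_2, B_2)$ to $\mathrm{Ext}_{A(2)}(B(2), B(2))$, and the map $k$ in the statement is this edge homomorphism. The strategy is then to show that $\tilde v_2$ is a permanent cycle in the relevant spectral sequence and that the corresponding filtration-lifting obstructions vanish. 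Concretely: I would try to directly build the $A$-module $A_2$ fitting into $0 \to \Sigma^7 B_2 \to A_2 \to B_2 \to 0$ by hand — i.e.\ show that the given $A$-module structure on $B(2) = B_2$ admits an extension by $\Sigma^7 B_2$ whose restriction to $A(2)$ is the free module $A(2)$. The cohomological obstruction to such an extension living over $\mathrm{Ext}^{1,7}$ rather than a higher filtration is controlled by $\mathrm{Ext}^{2,7}_A$ and $\mathrm{Ext}^{1,7}_A$ of the pair, and Lemma~\ref{lem:computations} pins down the analogous $A(2)$- and $E(Q_2)$-groups so tightly (they are concentrated, via the $D\Gen$ indexing, in degrees $|c| \geq 0$ with $v_2$-towers) that the only classes in the critical bidegree are multiples of $v_2$ itself. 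I would then argue the obstruction class, being a natural construction, maps to something already known to vanish downstairs (the extension $A(2)$ does exist), and use injectivity/compatibility of $k$ in the relevant bidegree to conclude the obstruction vanishes upstairs as well.

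The key steps, in order: (1) identify $\tilde v_2$ with the $E(Q_2)$-periodicity class and note it restricts nontrivially under $l$; (2) recall that $A$ is $A(2)$-free and set up the change-of-rings edge map realizing $k$; (3) show $\mathrm{Ext}^{1,7}_A(B_2, B_2)$ surjects onto the relevant summand of $\mathrm{Ext}^{1,7}_{A(2)}(B(2), B(2))$ — equivalently, that the $A(2)$-extension $A(2)$ lifts to an $A$-extension $A_2$ — by checking the obstruction in the next filtration group vanishes, using that the target groups are controlled by Lemma~\ref{lem:computations}; (4) take $\overline{v}_2$ to be the class of the resulting $A$-module extension, or equivalently a lift provided by surjectivity, and verify $k(\overline{v}_2) = \tilde v_2$ by construction.

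The main obstacle I anticipate is step (3): one genuinely has to verify that the universal $A$-module $A_2$ — free of rank one over $A(2)$ in low degrees — exists, i.e.\ that there is no obstruction to choosing the Steenrod operations $Sq^{2^i}$ for $i \geq 3$ compatibly on top of the $A(2)$-module $A(2)$ so that right multiplication by $Q_2$ behaves correctly. This is essentially the content foreshadowed by Example~\ref{ex:bobkova} and Condition~\ref{cond:A}: a naive $A$-module structure on $A(2)$ need not have $\mathrm{img}\,\mathcal{Q}_2^R \cong B(2)$, and the relation $Q_2 Sq^8 Q_2 \cdot \i = 0$ is exactly what must be arranged. So the real work is to exhibit, or prove the existence of, at least one $A$-module $A_2$ satisfying Condition~\ref{cond:A} (Roth's count of $1600$ structures, and the explicit structure in Appendix~\ref{SEC:Appendix1}, should supply this), and then the $\mathrm{Ext}$ bookkeeping of Lemma~\ref{lem:computations} makes the passage $\overline{v}_2 \mapsto \tilde v_2$ formal.
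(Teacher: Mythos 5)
Your proposal takes a genuinely different route from the paper, and it has a gap that I think is fatal as stated. The central problem is circularity. You want to prove the existence of $\overline{v}_2 \in Ext_A^{1,7}(B_2,B_2)$ lifting $\tilde v_2$ by first constructing an $A$-module $A_2$ fitting into the short exact sequence $0\to\Sigma^7 B_2\to A_2\to B_2\to0$ whose underlying $A(2)$-module is free of rank one. But such an extension \emph{is} an element of $Ext_A^{1,7}(B_2,B_2)$ (and the condition that it restrict over $A(2)$ to the free module $A(2)$ is exactly the condition that it map to $\tilde v_2$ under $k$), so the existence of $A_2$ and the existence of $\overline{v}_2$ are the same statement. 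You have restated the goal, not reduced it. Your proposed way out---appealing to Roth's classification of the $1600$ $A$-module structures on $A(2)$ together with Condition~\ref{cond:A}---does not close the gap either. The theorem quantifies over an \emph{arbitrary} $A$-module $B_2$ with underlying $A(2)$-module $B(2)$; picking one of Roth's $A_2$'s satisfying Condition~\ref{cond:A} produces \emph{some} $B_2' = \coker\mathcal{Q}_2^R$, but there is no reason that $B_2'$ should be isomorphic as an $A$-module to the $B_2$ you were handed. (Indeed Remark~\ref{1600structures} in the paper, which says every $B_2$ arises this way, is a \emph{consequence} of Theorem~\ref{thm:E2}, not an input to it.) Your sketch of an obstruction-theoretic or permanent-cycle argument via change of rings is also left too vague to evaluate: to show $\tilde v_2$ is a permanent cycle in the Bousfield--Kan (alg-$\tmf$) spectral sequence for $DB_2\otimes B_2$ one would have to actually bound $E_r^{2,7,r}$ for $r\geq1$, and Lemma~\ref{lem:computations} alone does not do this because those entries involve tensoring with $\overline{A\modmod A(2)}^{\otimes r}$.

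The paper's proof is much more elementary and sidesteps all of this. It takes minimal free resolutions $(F_\bullet,f_\bullet)$ over $A$ and $(G_\bullet,g_\bullet)$ over $A(2)$ of $DB_2\otimes B_2$ (respectively $DB(2)\otimes B(2)$), and uses the key observation that every element of $A$ of internal degree at most $7$ already lies in $A(2)$. The generator $y_{0,0}\in G_0$ corresponding to the identity satisfies $g_0(a\cdot y_{0,0})\neq0$ for $0\leq|a|\leq6$ and $g_0(Q_2\cdot y_{0,0})=0$, forcing a generator $y_{1,7}\in G_1$ with $g_1(y_{1,7})=Q_2\cdot y_{0,0}$; this is $\tilde v_2$. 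Because $A$ and $A(2)$ agree through degree $7$, the identical analysis applies verbatim to $x_{0,0}\in F_0$, producing $x_{1,7}\in F_1$ with $f_1(x_{1,7})=Q_2\cdot x_{0,0}$; this is $\overline{v}_2$, and $k(\overline{v}_2)=\tilde v_2$ by construction. No change-of-rings spectral sequence, no obstruction theory, no appeal to Roth. If you want to salvage your approach, the missing idea you need is exactly this low-degree agreement of $A$ with $A(2)$, used at the level of minimal resolutions; once you have it, the lift is automatic rather than something to be checked against an obstruction group.
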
 
\begin{proof}
Consider a minimal free $A$-module resolution of $DB_2 \otimes B_2$, 
\begin{equation} \label{eqn:free}
 \ldots \to F_i \overset{f_{i}}\to \ldots \to F_2 \overset{f_2}\to F_1 \overset{f_1}\to F_0 \overset{f_0}\to DB_2 \otimes B_2.
\end{equation}
A minimal resolution has the property that all the differentials in the sequence 
\[ Hom_{A}(F_0, \Ft) \to Hom_{A}(F_1, \Ft) \to Hom_{A}(F_2, \Ft) \to \ldots \] 
are trivial. Thus, 
\begin{equation} \label{eqn:minidentityA}
 Ext_{A}^{i,*}(B_2,B_2) \iso  Hom_{A}^*(F_i, \Ft) \iso \Ft \langle \text{$A$-module basis of $F_i$}\rangle. 
\end{equation}

The identity map $1_{B_2}:B_2 \to B_2$ generates a nontrivial element
\[ x_{0,0} \in Ext_{A}^{0,0}(DB_2 \otimes B_2, \Ft),\]
which corresponds to a basis element of $F_0$ by \eqref{eqn:minidentityA}.

Now consider a minimal $A(2)$-module resolution of $DB(2) \otimes B(2)$, 
\begin{equation} \label{eqn:freeA2}
 \ldots \to G_i \overset{g_i}\to \ldots \to G_2 \overset{g_2}\to G_1 \overset{g_1}\to G_0 \overset{g_0}\to DB(2) \otimes B(2).
\end{equation}
There is a basis element $y_{0,0} \in G_0$, which corresponds to \[ 1_{B(2)} \in Ext_{A(2)}^{0,0}( B(2), B(2)), \] such that  
$k(x_{0,0}) = y_{0,0}.$ From the $A(2)$-module structure of $B(2)$, it is clear that
\[ g_0(a \cdot y_{0,0}) \neq 0\]
for any $a \in A(2)$ with $0 \leq |a| \leq 6$ and  
\[ g_0(Q_2 \cdot y_{0,0}) = 0.\]
Therefore there is a generator $y_{1,7} \in G_1$ such that 
\[ g_1(y_{1,7}) = Q_2 \cdot y_{0,0},\]
which corresponds to $\tilde{v}_2$. Since any $a \in A$ with $0 \leq |a| \leq 7$ belongs to $A(2)$, the same assertion holds for the element $x_{0,0}$, i.e. there is a basis element $x_{1,7} \in F_1$ such that 
\[ g_1(x_{1,7}) = Q_2 \cdot x_{0,0}.\]
 The generator $x_{1,7}$ will correspond to an element
\[ \overline{v}_2 \in Ext_{A}^{1,7}(DB_2 \otimes B_2, \Ft)\]
with the desired property.
\end{proof}

This shows that any $A$-module structure on $B(2)$ can be obtained as $\img\mathcal{Q}_2^R$ (or $\coker\mathcal{Q}_2^R$) for a given $A$-module $A_2$. In her thesis \cite{Roth}, Marilyn Roth showed that there exist $1600$ different $A$-module structures on $A(2)$. Thus to obtain an $A$-module structure on $B(2)$ in practice, we should consider a left $A$-module structure $A_2$ on $A(2)$, and consider the $A$-modules $\img \mathcal{Q}_2^R$ or $\coker \mathcal{Q}_2^R$. But not every $A$-module structure on $A(2)$ will lead to an $A$-module structure on $B(2)$ as the underlying $A(2)$-module structure of $\img\mathcal{Q}_2^R$ (or $\coker\mathcal{Q}_2^R$) may not be isomorphic to $B(2)$. We do not know of an explicit $A$-module structure on $A(2)$ for which the underlying $A(2)$-module structure on $\img\mathcal{Q}_2^R$ differs from $B(2)$, however, one cannot easily exclude the existence thereof. 
In the following lemma we give a condition which guarantees that the underlying $A(2)$-module structure on $\img \mathcal{Q}_2^R$ and $\coker \mathcal{Q}_2^R$ is precisely $B(2)$. 
\begin{lem} Let $A_2$ denote an $A$-module whose underlying $A(2)$-module structure is simply $A(2)$. Then the underlying $A(2)$-module structure of $\img \mathcal{Q}_2^R$ is $B(2)$ if and only if $A_2$ satisfies 
\begin{equation} \label{cond:A}
Q_2 Sq^8 Q_2 \cdot  \i = 0,
\end{equation}
where $\i$ is the generator in degree $0$.
\end{lem}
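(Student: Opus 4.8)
The plan is to exploit the presentation $A_2=A(2)\cdot\i$ provided by the hypothesis. Set $L:=\mathrm{Ann}_A(\i)$, so $A_2=A/L$; the freeness of $A_2$ over $A(2)$ forces a vector-space splitting $A=A(2)\oplus L$, and the resulting retraction $\pi\colon A\to A(2)$ is left $A(2)$-linear (because $L$ is a left ideal) and satisfies $a\i=\pi(a)\i$ for every $a\in A$. Let $I:=\img\mathcal Q_2^R=A(2)\cdot(Q_2\i)\subseteq A_2$. Since $Q_2$ is central in $A(2)$ and $H(A(2);Q_2)=0$, $I$ is abstractly $\Sigma^7B(2)$ (the shift being suppressed in the statement) and, moreover, $I=\ker\big(Q_2\cdot\colon A_2\to A_2\big)=\img\big(Q_2\cdot\colon A_2\to A_2\big)$. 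The assertion of the lemma is that $I$ is a \emph{sub-$A$-module} of $A_2$ — so that the $A(2)$-module $B(2)$ it carries is the restriction of an $A$-module structure (see Example~\ref{ex:bobkova}) — exactly when $Q_2Sq^8Q_2\cdot\i=0$. One implication is then immediate: if $I$ is stable under $A$ then $Sq^8(Q_2\i)\in I=\ker(Q_2\cdot)$, so $Q_2Sq^8Q_2\cdot\i=Q_2\cdot Sq^8(Q_2\i)=0$.

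For the converse I would first reduce $A$-stability of $I$ to the single requirement $Sq^8I\subseteq I$. Because $I$ is already $A(2)$-stable and $A$ is generated by $Sq^1,Sq^2,Sq^4,Sq^8,Sq^{16},\dots$, it suffices to check stability under the operations $Sq^{2^i}$ with $i\ge3$. Now $A_2$ is concentrated in degrees $\le23$ while $I$ lives in degrees $\ge7$, so $Sq^{2^i}$ annihilates $I$ whenever $2^i\ge32$, and $Sq^{16}I=Sq^{16}(I_7)\subseteq(A_2)_{23}$, which equals $I_{23}$ since both are one-dimensional (by Poincar\'e duality for $A(2)$ and the fact that $\dim B(2)_{16}=1$). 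Thus only $Sq^8$ is at issue. For $a\in A(2)$ I would then write
\[
Sq^8\cdot(aQ_2\i)=(Sq^8aQ_2)\i=\big(\pi(Sq^8a)\,Q_2\big)\i+(\ell_aQ_2)\i,\qquad \ell_a:=Sq^8a-\pi(Sq^8a)\in L.
\]
The first summand lies in $I$, so $Sq^8I\subseteq I$ amounts to $(\ell_aQ_2)\i\in I$ for all $a\in A(2)$, equivalently — using $I=\ker(Q_2\cdot)$ — to $(Q_2\ell_aQ_2)\i=0$ for all $a\in A(2)$. Since $\deg(Q_2\ell_aQ_2)=22+\deg a$ and $A_2$ vanishes above degree $23$, this holds automatically once $\deg a\ge2$, leaving only the cases $a=1$ and $a=Sq^1$.

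Those I would dispatch using the identity $Q_2Sq^8Q_2=Q_2Q_3$, a consequence of the recursion $Q_3=Sq^8Q_2-Q_2Sq^8$ and $Q_2^2=0$, together with the commutativity of the Milnor primitives ($Q_0Q_2=Q_2Q_0$ and $Q_2Q_3=Q_3Q_2$). For $a=1$ the term $Q_2\pi(Sq^8)Q_2$ equals $\pi(Sq^8)Q_2^2=0$ because $\pi(Sq^8)\in A(2)$, so $(Q_2\ell_1Q_2)\i=(Q_2Sq^8Q_2)\i$, which is precisely the hypothesis. For $a=Sq^1$ the $\pi$-term vanishes for the same reason, and because $Sq^1=Q_0$ commutes with $Q_2$ one finds $Q_2Sq^8Sq^1Q_2=(Q_2Sq^8Q_2)Sq^1=Q_2Q_3Q_0=Q_0(Q_2Q_3)=Q_0\cdot(Q_2Sq^8Q_2)$, hence $(Q_2\ell_{Sq^1}Q_2)\i=Q_0\cdot\big((Q_2Sq^8Q_2)\i\big)=0$ as soon as the hypothesis holds. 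Combining the steps proves the equivalence.

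The real obstacle is this last step, and above all the identity $Q_2Sq^8Q_2=Q_2Q_3$: a priori ``$Sq^8$ preserves $I$'' is an infinite family of relations, one for each $a\in A(2)$, and it is far from obvious that all of them should be controlled by a single relation in degree $22$. It is the interplay of the degree bound (which discards all cases but $a=1,Sq^1$) with this collapsing identity — which turns the $a=1$ case into the stated condition and the $a=Sq^1$ case into a consequence of it — that makes the lemma go through, and pinning down that identity is where I expect the actual work to lie.
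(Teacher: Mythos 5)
Your proof is correct, and in broad strokes it follows the paper's strategy: a degree bound confines the possible failure of $A$-stability to a single squaring operation, after which a short computation pins it down to the stated condition. But you are noticeably more careful than the paper in one respect. The paper's proof only verifies that $Sq^{2^n}$ applied to the single element $Q_2\cdot\i$ lands back in $\img\mathcal{Q}_2$ (and then reduces to $n=3$ for dimensional reasons); it silently assumes that checking the $A$-action on the cyclic $A(2)$-generator $Q_2\i$ of $I$ is enough to conclude that all of $I$ is $A$-stable. Since $Sq^8$ does not commute with $A(2)$, this does require justification. You supply one: you check $Sq^8(aQ_2\i)$ for all $a\in A(2)$, use the degree bound to cut the list down to $a\in\{1,Sq^1\}$, and then dispatch $a=Sq^1$ with the identity $Q_2Sq^8Q_2=Q_2Q_3$ and commutativity of the Milnor primitives. (The paper's implicit reduction can also be justified conceptually: since $Q_3$ is central in $A(3)\supset A(2)$, one has $Sq^8aQ_2\i = a(Q_3\i)+Q_2(Sq^8a\,\i)$, and the second summand is automatically in $I$ while the first is once $Q_3\i$ is — which is exactly the $a=1$ check.) Two small remarks: first, the retraction $\pi$ is a harmless detour, since $Q_2\pi(\cdot)Q_2=0$ means you could simply observe that $Sq^8(aQ_2\i)\in I$ iff $(Q_2Sq^8aQ_2)\i=0$ and run the same degree argument; second, the identity $Q_2Sq^8Q_2=Q_2Q_3$ that worries you at the end is immediate from $Q_3=Sq^8Q_2-Q_2Sq^8$ and $Q_2^2=0$, so there is no hidden difficulty there.
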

\begin{proof}
Let $\mathcal{Q}_2$ denote the map of multiplication by $Q_2$ (on the left or right) in the category of $A(2)$-modules and let $\mathcal{Q}_2^R$ denote the map of multiplication by $Q_2$ on the right in the category of $A$-modules. The underlying $A(2)$-module structure on $\img \mathcal{Q}_2^R$ and $\coker \mathcal{Q}_2^R$ is precisely $B(2)$ unless there exists $n \geq 3$ such that $Sq^{2^n}Q_2 \cdot \i$ does not belong to $\img \mathcal{Q}_2$. 
For dimensional reasons we only need to check the case when $n=3$. Note that $Q_2 Sq^8 Q_2 \cdot \i = 0$ if and only if 
 $$Sq^8Q_2 \cdot \i  = Q_2a \cdot \i = aQ_2 \cdot \i$$ for some $a \in A(2)$  as $\ker \mathcal{Q}_2 \iso \img \mathcal{Q}_2$ and the result follows. 
\end{proof}

Because $|Q_2Sq^8Q_2|=22$ while the highest degree of $A(2)$ is $23$, degree reasons alone are insufficient to guarantee that \eqref{cond:A} is satisfied.

\begin{rem}\label{1600structures}(Number of $A$-module structures on $B_2$) Since any $A$-module structure on $B(2)$ can be produced as a quotient of an $A$-module structure of $A(2)$ which satisfies \eqref{cond:A}, one can in principle count the number of $A$-module structures on $B(2)$ using the results of \cite{Roth}. This method of counting is extremely tedious as there are $1600$ different $A$-module structures on $A(2)$. Moreover, the number of $A$-module structures does not reflect deeper concepts, nor is it directly related to the purpose of this paper. Nonetheless, in the appendix we discuss in detail how to use \cite{Roth} to produce $A$-module structures on $B(2)$ and demonstrate it via an example. The authors would be curious to know if there is a more elegant method of counting $A$-module structures on $B(2)$. 
\end{rem}

\section{Toda's realization theorems} \label{SEC:TodaRealize}
The purpose of this section is to review Toda's criteria for realizing an $A$-module as the cohomology of a spectrum. In the process we will review how to build Adams towers and dual Adams towers, which are essential in Section~\ref{SEC:nonuniqueness}. 

Let $M$ be any graded bounded below $A$-module. Toda \cite[Lemma~$3.1$]{Toda} gave a criterion for the existence of a spectrum $X$ which realizes $M$, i.e., 
\[ H^{*}(X) = M.\]
\begin{thm}[Toda] \label{thm:todaR1}Let $M$ be a graded $A$-module which is bounded below. If for every $n$ such that $M^n \neq 0$, one has  
\[ Ext_{A}^{s,n+s-2}(M, \Ft) = 0 \text{ for every $s \geq 3$,}\]
then there exists a bounded below spectrum $X$ such that $H^*(X) = M$.
\end{thm}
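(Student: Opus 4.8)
\textbf{Proof proposal for Theorem~\ref{thm:todaR1} (Toda's realization criterion).}

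The plan is to build the spectrum $X$ as the inverse limit of a Postnikov-style tower, realizing $M$ one cell-dimension at a time, and to use the $Ext$-vanishing hypothesis to show that the only possible obstructions to extending the tower vanish. First I would set up the algebraic input: since $M$ is bounded below, choose a minimal free $A$-module resolution $\cdots \to P_2 \to P_1 \to P_0 \to M \to 0$. Dualizing and using that $A$ is the cohomology of the mod $2$ Eilenberg--Mac Lane spectrum, each $P_i$ is realized by a wedge of suspensions of $H\Ft$, and the maps $P_{i} \to P_{i-1}$ are realized (non-uniquely) by maps of such wedges. One then assembles these into an \emph{Adams tower} (or rather its would-be target): a tower of spectra $\cdots \to X_2 \to X_1 \to X_0$ together with compatible maps whose associated graded in cohomology recovers the resolution. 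The spectrum $X$, if it exists, should be $\operatorname{holim} X_s$, and the content of the theorem is that the tower can actually be constructed.

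Next I would run the obstruction-theoretic induction. Suppose inductively that we have realized the $n$-skeleton, i.e.\ a spectrum $X^{\langle n\rangle}$ with $H^*(X^{\langle n\rangle})$ agreeing with $M$ in degrees $\leq n$ and vanishing above, together with the requisite data. To attach the cells in degree $n+1$ (there are finitely many in each degree since $M$ is bounded below and degreewise finite — or one argues more carefully degreewise) one needs a map $\bigvee S^{n} \to X^{\langle n\rangle}$ realizing the relevant part of the attaching data, and the obstruction to this map lifting appropriately, respectively to the ambiguity in the choice, lives in groups of the form $Ext_A^{s,n+s-2}(M,\Ft)$ with $s \geq 2$ (the $s=2$ group governing realizability of the next $A$-module differential and the $s\geq 3$ groups the higher obstructions to coherence). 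The hypothesis kills precisely the $s\geq 3$ groups whenever $M^n \neq 0$, which is exactly where a new cell forces a potential obstruction; the $s = 2$ part is automatically handled because it only records the (already given) $A$-module structure of $M$, not a genuine topological choice. Feeding this in, every obstruction class is forced to be zero, so the induction proceeds and $X := \operatorname{holim}_n X^{\langle n\rangle}$ exists with $H^*(X) \cong M$.

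The main obstacle I anticipate is bookkeeping the precise bidegree in which each obstruction lives and matching it to the stated range $s \geq 3$, $t = n+s-2$. Concretely: when one tries to extend a partial realization past dimension $n$, the primary obstruction sits in $Ext_A^{2,n+?}$ and is tautologically realized, but the \emph{secondary and higher} obstructions — coming from the failure of the chosen maps $X_s \to X_{s-1}$ to compose to zero on the nose, forcing a null-homotopy, forcing a higher coherence, and so on — cascade through $Ext_A^{s,\ast}$ for increasing $s$, and one must verify the internal degree of the obstruction to killing the $v$-fold composite is exactly $n + s - 2$. This is the delicate indexing step; once it is pinned down, the vanishing hypothesis applies verbatim. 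A secondary technical point is ensuring convergence of the homotopy inverse limit and that no $\lim^1$ term corrupts $H^*(\operatorname{holim} X_n)$, which follows from the tower being eventually constant in each fixed degree because $M$ is bounded below and of finite type in each degree.
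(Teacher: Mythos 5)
Your proposal is the direct skeleton-by-skeleton obstruction argument, which is essentially Toda's own route in \cite{Toda}. The paper does not prove Theorem~\ref{thm:todaR1} this way: it cites Toda and instead reduces to Theorem~\ref{thm:todaR2}, by observing that the hypothesis $Ext_A^{s,n+s-2}(M,\Ft)=0$ for all $n$ with $M^n\neq 0$ and all $s\geq 3$ forces $Ext_A^{s,s-2}(M,M)=0$ for $s\geq 3$ via the algebraic Atiyah--Hirzebruch spectral sequence $E_1^{s,t,n}=DM^n\otimes Ext_A^{s,t}(M,\Ft)\Rightarrow Ext_A^{s,t+n}(M,M)$. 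It then proves Theorem~\ref{thm:todaR2} by building a dual Adams tower $\ast=X_0\leftarrow X_1\leftarrow\cdots$ inductively on Adams filtration $r$ (not on cell dimension $n$): at each step one must modify $k_r^*$ so that a certain cocycle $c_r\in Ext_A^{r+1,r-1}(M,M)$ becomes a coboundary, and the vanishing of that single group is exactly what is needed. The tradeoff between the two routes: yours is more elementary and closer to Toda's original, but it forces you to track the internal degree $n+s-2$ of each obstruction as $n$ ranges over all degrees where $M$ is nonzero --- precisely the ``delicate indexing'' you flag, and the step you leave unverified --- and you must also justify the gliblooking claim that the $s=2$ obstruction is ``automatically handled.'' Passing through $Ext_A^{*,*}(M,M)$ as the paper does collapses the $n$-dependence into a single bidegree, and has the side benefit that the dual Adams tower machinery it sets up is reused wholesale in Section~\ref{SEC:nonuniqueness} to count the distinct realizations of $B_2$; the cost is the extra Atiyah--Hirzebruch spectral sequence argument bridging the two $Ext$ hypotheses.
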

There is yet another realization theorem of \emph{finite} $A$-modules due to Toda. Because we will only ever consider finite $A$-modules, this finiteness hypothesis can be made at no cost to the rest of the paper.
\begin{thm}[Toda] \label{thm:todaR2}Let $M$ be a finite graded $A$-module. If 
\[ Ext_A^{s, s-2}(M, M) = 0 \text{ for every $s \geq 3$,} \]
then there exists a bounded below spectrum $X$ such that $H^*(X) = M$.
\end{thm}
A sketch proof of Theorem~\ref{thm:todaR2} can be found in notes of Haynes Miller~\cite{Miller}. While in a version of Theorem~\ref{thm:todaR2} is proved in \cite[Appendix A]{BKS}, it is significantly more abstract as the theorem is proved in the much more general context of triangulated categories with additional properties. We take this as an opportunity to give a proof of Toda's Realization theorem in its original form, which may be easier for first time readers to follow. We merely complete all the arguments of the sketch proof given in \cite{Miller}. 

First note that Theorem~\ref{thm:todaR2} is stronger than Theorem~\ref{thm:todaR1}, as the realization criterion in Theorem~\ref{thm:todaR1} implies the realization criterion in Theorem~\ref{thm:todaR2} when $M$ is finite. To see this, consider the algebraic Atiyah-Hirzebruch spectral sequence 
\begin{equation} \label{algAHSS}
E_1^{s,t,n} :=  M^n \otimes Ext_{A}^{s,t}(M, \Ft) \Rightarrow Ext_{A}^{s,t-n}(M,M).
\end{equation}
If for every $s \geq 3$ and every $n$ such that $M^n \neq 0$, we have $Ext_{A}^{s,n+s-2}(M, \Ft) = 0$, then it follows that for all $s\geq 3$ and all $n\in\mathbb{Z}$, we have 
\[E_1^{s,n+s-2, n} = M^{n} \otimes Ext_{A}^{s, n+s-2}(M, \Ft) = 0.\]
Thus, the Atiyah-Hirzebruch spectral sequence forces\[Ext_A^{s, s-2}(M,M) = 0\] for every $s\geq 3$.

The broad idea is to consider a free $A$-module resolution of $M$ 
\[ \ldots \overset{d^i}\to F_i \to \ldots \overset{d^1}\to F_1 \overset{d^0}\to F_0 \twoheadrightarrow M\]
and build a corresponding tower of spectra 
\[ \to X_i \to \ldots \to X_1 \to X_0 \]  
often called the \emph{dual Adams tower}, such that 
\[ H^*(\inlim X_i) \iso M.\]

Recall that the \emph{Adams tower} for a spectrum $X$ consists of spectra $\lbrace \tilde{X}_r: r \geq 0 \rbrace$ with maps 
\[ 
\xymatrix{ 
\tilde{X}_0 = X \ar[d]_{\tilde{k}_0} & \ar[l]_{\tilde{i}_0} \tilde{X}_1 \ar[d]_{\tilde{k}_1}& \ar[l]_{\tilde{i}_1} \tilde{X}_2 \ar[d]_{\tilde{k}_2}  & \ar[l]_{\tilde{i}_2} \dots & \ar[l] \tilde{X}_r \ar[d]_{\tilde{k}_r} & \ar[l]_{\tilde{i}_r} \dots \\
\tilde{K}_0 \ar@{-->}[ur]^{\tilde{s}_0} & \tilde{K}_1 \ar@{-->}[ur]^{\tilde{s}_1} & \tilde{K}_2 \ar@{-->}[ur]^{\tilde{s}_2} & \dots & \tilde{K}_r \ar@{-->}[ur]^{\tilde{s}_r} & \dots,
}
\]
such that 
\begin{itemize}
\item $\tilde{K}_r$ is a generalized Eilenberg-Mac Lane spectrum (GEM), 
\item the sequence 
\[ \tilde{X}_{r+1} \overset{\tilde{i}_r}\to \tilde{X}_r \overset{\tilde{k}_r}\to \tilde{K}_r\]
is a cofibration,
\item $\tilde{s}_r:\tilde{K}_r \to \tilde{X}_{r+1}$ is the connecting map of degree $-1$ of the above cofiber sequence, and 
\item the composite $\tilde{d}_r=  \Sigma \tilde{k}_{r+1} \circ \tilde{s}_r: \tilde{K}_r \to \Sigma \tilde{X}_{r+1} \to \Sigma \tilde{K}_{r+1}$ induces the map 
 \[ d^r: F_{r+1} := H^{*}(\Sigma^{r+1}\tilde{K}_{r+1}) \to  F_{r} := H^{*}(\Sigma^r \tilde{K}_{r}) .\]
\end{itemize}
Let $X_r$ be the cofiber in the cofiber sequence 
\[ \tilde{X}_r \to X \to X_r.\]
The map $\tilde{i}_r: \tilde{X}_{r+1} \to \tilde{X}_r$ induces a map $i_r$
\[ 
\xymatrix{
\tilde{X}_{r+1} \ar[d]_{\tilde{i}_r} \ar[r] & X \ar[r] \ar@{=}[d] & X_{r+1}  \ar@{-->}[d]^{i_r}\\
\tilde{X}_r\ar[d]_{\tilde{k}_r} \ar[r] & X \ar[r] \ar[d] & X_r \ar@{-->}[d]^{k_r} \\
\tilde{K}_r \ar[r] & \ast \ar[r] & K_r
}
\]
such that 
\[ X_{r+1} \overset{i_r}\to  X_{r} \overset{k_r}\to  K_{r}\]
forms a cofiber sequence, where $K_{r} \simeq \Sigma \tilde{K}_r$. The collection $\lbrace X_r : r \geq 0 \rbrace$ is the dual Adams tower. Adams showed that if $X$ is a bounded below spectrum, then we have 
\[ \inlim \tilde{X}_i \simeq \ast.\]
Therefore, we also have 
\[ \inlim X_i \simeq X_p,\]
where $X_p$ is the $p$-completion of $X$. 
Just like the Adams tower, the dual Adams tower of a spectrum $X$ fits into the diagram 
\begin{equation} \label{eqn:dualtower}
\xymatrix{ 
X_0 = \ast \ar[d]_{k_0} & \ar[l]_{i_0} X_1 \ar[d]_{k_1}& \ar[l]_{i_1} X_2 \ar[d]_{k_2}  & \ar[l]_{i_2} \dots & \ar[l] X_r \ar[d]_{k_r} & \ar[l]_{i_r} \dots \\
K_0 \ar@{-->}[ur]^{s_0} & K_1 \ar@{-->}[ur]^{s_1} & K_2 \ar@{-->}[ur]^{s_2} & \dots & K_r \ar@{-->}[ur]^{s_r} & \dots,
}
\end{equation}
where $s_i:K_i \to \Sigma X_{i+1}$ are the connecting maps of the fiber sequences
\[ X_{i+1} \to X_i \to K_i\]
and the composite 
\[ k_{i+1} \circ s_i: K_i \to K_{i+1}\]
induces the map $d^r:F_{r+1} \to F_r.$

\begin{proof}[Proof of Theorem~\ref{thm:todaR2}] 
Consider a free $A$-module resolution of $M$ 
 \[ \ldots \overset{d^r}\to F_r \overset{d^{r-1}}\to \ldots \overset{d^1}\to F_1 \overset{d^0}\to F_0 \to M.\]
Let $K_r$ be the GEM such that $H^{*}(K_r) \iso \Sigma^{1-r}F_r$ (note $K_r$ exists as $M$ is finite). We intend to build a dual Adams tower as in \eqref{eqn:dualtower} 
corresponding to this free resolution. Using the condition 
\[ Ext_A^{s,s-2}(M,M) = 0\] 
for $s \geq 3$, we will show that $M$ splits off $H^*(X_r)$ via the maps $p_r$ and $t_r$ as displayed in the diagram 
\begin{equation} \label{eqn:split}
\xymatrix{
  &M \ar[rr]^{\iso} \ar[dr]_{p_{2}} && M \ar[rr]^{\iso} \ar[dr]^{p_3} && M \ar@{}[dr]^{\dots}\ldots \\
H^*(X_{1}) \ar[rr]_{i_{1}^*} \ar[ur]^{t_{1}}&& H^*(X_{2}) \ar[ur]^{t_{2}} \ar[rr]_{i_{2}^*}&& H^*(X_{3})\ar[rr]_{i_{3}^*}\ar[ur]^{t_{3}} && \ldots 
} 
\end{equation}
Let $X = \inlim X_r$. The above splitting will ensure  
\[ \colim H^*( X_r) \iso M. \]
\begin{component}[Case 1: $r=0,1$ and $2$] The first few cases are straightforward. We choose $X_0= \ast$. Since 
\[ X_1 \to X_0 \to K_0\]
is a fiber sequence, it is immediate that $X_1 = \Sigma^{-1}K_0.$ Choose $k_1 = d_0$ and let $X_2$ be the fiber in the sequence
  \[ X_2 \to X_1  \overset{k_1}\to K_1. \]
Now because $ d_1 \circ \Sigma^{-1}d_0: \Sigma^{-2}K_0 \to K_2$ is trivial, we can construct the map $k_2$ in the diagram
\[ 
\xymatrix{
\Sigma^{-1}X_1 \ar[r]^{\Sigma^{-1} k_1} & \Sigma^{-1}K_1 \ar[d]_{d_1} \ar[r] & X_2 \ar[r] \ar@{-->}[dl]^{k_2} & X_1 \\ 
& K_2.
}
\]
Let $X_3$ be the fiber of $k_2$:
\[ X_3 \to X_2  \overset{k_2}\to K_2, \] 
and let $t_1$ be the projection map 
\[ H^*(X_1) \iso H^*(\Sigma^{-1}K_0) \iso F_0 \longrightarrow M.\]
\end{component}
Producing the map $k_3: X_3 \to K_3$ and the splitting $t_2:H^*(X_2) \to M$ is the first nontrivial step of an inductive argument. 
\begin{component}[Case 2: $r=3$]
The fiber sequence $X_2 \to X_1 \to K_1$ produces a long exact sequence 
\[ \ldots \from \Sigma^{-1}F_0 \overset{d^0}\from \Sigma^{-1} F_1 \from H^*(X_2) \from F_0 \overset{d^0}\from F_1 \from \ldots.\] 
Since the cokernel of the map $d^0: F_1 \to F_0$ is $M$, we have the exact sequence of the top row in the diagram
\begin{equation} \label{eqn:split2}
\xymatrix{
0 \ar[r] & M \ar[r]^{p_2} & H^*(X_2) \ar[r]  & \Sigma^{-1}\ker d^0 \ar[r] \ar[d] & 0 \\
\Sigma^{-1} F_4 \ar[r]_{d^3}& \Sigma^{-1} F_3 \ar[r]_{d^2}\ar@{-->}[u]_{c_2} & \Sigma^{-1} F_2 \ar[r]_{d^1}\ar[ur] \ar@{-->}[ul]^{b_2} \ar[u]_{k_2^*}& \ar@{-->}[ul]^{ \ \ \ \ a_2}|\hole \Sigma^{-1} F_1
}
\end{equation}
Since $d^1 \circ d^2 = 0$ and the right vertical arrow of the above diagram is a monomorphism,  the map $k_2^* \circ d^2$ factors through a map $c_2: \Sigma^{-1}F_3 \to M$. Notice that 
\[ p_2 \circ c_2 \circ d^3 = k^2 \circ d^2 \circ d^3 = 0\] 
and $p_2$ is injective, hence \[ c_2 \circ d^3 =0.\]
 Therefore the map $c_2$ represents a class 
\[ \overline{c}_2 \in Ext_A^{3,1}(M, M).\]
Since $Ext_A^{3,1}(M, M) = 0$ by hypothesis, $c_2$ is a coboundary, i.e. $c_2$ factors through $d^2$  
\[ c_2 = b_2 \circ d^2 \]
and 
\[ k_2^* \circ d^2 = p_2 \circ b_2 \circ d^2.\]
So if we replace $k_2^*$ with $k_2^*-p_2\circ b_2$, which is exactly of the type of alteration we are allowed to make, we see that 
\[k_2^* \circ d^2 = 0. \]
 Since the target of the map $k_2$ is a GEM, the algebraic alteration of $k_2^*$ can be realized topologically. Therefore, we have a map $k_3$ in the diagram 
\[ 
\xymatrix{
\Sigma^{-1}X_2 \ar[r]^{k_2} & \Sigma^{-1}K_2 \ar[d]_{d_2} \ar[r] & X_3 \ar[r] \ar@{-->}[dl]^{k_3} & X_2 \\ 
& K_3.
}
\]
Define $X_4$ to be the fiber of the map $k_3$. Since, $k_2^* \circ d^2 = 0$,  $k_2^*$ factors through $d^1$ by a map which we denoted by  $a_3$ in \eqref{eqn:split2}. Consequently, the exact sequence of the top row in the diagram of \eqref{eqn:split2} splits and we have  
\[t_2: H^*(X_2) \to M.\]
\end{component}

\begin{component}[Case 3: $r>3$]
Now inductively assume that we have constructed 
\begin{itemize}
\item $k_{r-1}: X_{r-1} \to K_{r-1}$,
\item $X_r$ as the fiber of the map $k_{r-1}$, and 
\item A diagram of maps 
\begin{equation} \label{eqn:indsplit}
\xymatrix{ 
0 \ar[r]&  M \ar@<.3ex>[r]^{p_{r-1} \ \ \ } & \ar@/^/[l]^{t_{p-1}}H^*(X_{r-1}) \ar[r] & \Sigma^{-r+2} \ker d^{r-3} \ar[r] \ar[d]&  0 \\
&& \Sigma^{-r+2} F_{r-1} \ar[u]^{k_{r-1}^*}\ar[r]_{d^{r-2}}& \Sigma^{-r+2} F_{r-2} \ar[ul]^{a_{r-1}}
}
\end{equation}
whose top row is split exact.
\end{itemize}
The fiber sequence 
\[ X_r \to X_{r-1} \to K_{r-1} \]
produces the horizontal exact sequence in the diagram 
\begin{equation}
\xymatrix{
& 0 \ar[d] &&& 0 \ar[d]\\
& M \ar@<.5ex>[d]&&& \Sigma M \ar@<.5ex>[d] \\
\Sigma^{-r+2}F_{r-1} \ar[d]_{d^{r-2}}\ar[r]^{k_{r-1}^*} &H^*(X_{r-1})\ar@/^/[u]^{t_{p-1}} \ar[d]\ar[r]^{i_{r-1}^*} & H^*(X_r) \ar[r]^{\delta_{r-1}^*}& \Sigma^{-r+1}F_{r-1} \ar[r]^{k_{r-1}^*} \ar[d]_{d^{r-2}} & H^{*+1}(X_{r-1})\ar@/^/[u]^{t_{p-1}} \ar[d] \\
\Sigma^{-r+2}F_{r-2} \ar[ur]^{a_{r-1}}& \ar[l]\Sigma^{-r+2} \ker d^{r-3} \ar[d]&&\Sigma^{-r+1}F_{r-2} \ar[ur]^{a_{r-1}}& \ar[l] \Sigma^{-r+1} \ker d^{r-3} \ar[d] \\
& 0 &&& 0.
}
\end{equation}
The vertical split exact sequences are the part of the assumptions for the inductive step.  By a diagram chase in the above diagram we find that
\begin{itemize}
\item the image of the map \[ k_{r-1}^*: \Sigma^{-r+2}F_{r-1} \to H^*(X_{r-1})\] is $\Sigma^{-r+2} \ker d^{r-3}$, hence
\[\ker i_{r-1}^* \iso \coker k_{r-1}^* \iso M ,\] 
and,
\item the kernel of the map 
\[ k_{r-1}^* : \Sigma^{-r+1}F_{r-1} \to H^{*+1}(X_{r-1}) \]
is isomorphic to $\ker d^{r-2}$, hence 
\[ \img \delta_{r-1}^* \iso \Sigma^{-r+1}\ker d^{r-2}.\]
\end{itemize}
Consequently, the top row in the diagram 
\begin{equation} \label{eqn:splitR}
\xymatrix{
0 \ar[r] & M \ar[r]^{p_r} & H^*(X_r) \ar[r]  & \Sigma^{-r+3}\ker d^{r-2} \ar[r] \ar[d] & 0 \\
\Sigma^{-r+1} F_{r+2} \ar[r]_{d^{r+1}}& \Sigma^{-r+1} F_{r+1} \ar[r]_{d^r}\ar@{-->}[u]_{c_r} & \Sigma^{-r+1} F_r \ar[r]_{d^{r-1}}\ar[ur] \ar@{-->}[ul]^{b_r} \ar[u]_{k^r}& \ar@{-->}[ul]^{ \ \ \ \ \ \ \ \ a_r}|\hole \Sigma^{-r+1} F_{r-1}
}
\end{equation}
is an exact sequence. The diagram in \eqref{eqn:splitR} is just the generalization of the diagram in \eqref{eqn:split2}. Therefore, one can make exactly the same arguments as in the case of $r=3$, to conclude that $k_r$ factors through a map $c_r$. The map $c_r$ is a cocycle and it represents a class  
\[ \overline{c}_r \in Ext_{A}^{r+1,r-1}(M,M).\] 
Since $Ext_A^{r+1,r-1}(M,M) =0$ by hypothesis, $c_r$ is also a coboundary, hence factors through $d^r$ via a map $b_r$. Replacing $k_r^*$ with $k_r^* - p_r \circ b_r$, we see that  \[d_r \circ k_r = 0 \] in the diagram
\[ 
\xymatrix{
\Sigma^{-1}X_r \ar[r]^{k_r} & \Sigma^{-1}K_r \ar[d]_{d_r} \ar[r] & X_{r+1} \ar[r] \ar@{-->}[dl]^{k_{r+1}} & X_{r} \\ 
& K_{r+1}.
}
\]
Hence we have a lift $k_{r+1}: X_{r+1} \to K_{r+1}$. Define $X_{r+2}$ to be the fiber of $k_{r+1}$. Since $k_r^* \circ d^r = 0$, $k_r^*$ factors through $d^{r-1}$ via the map $a_r$. Therefore, the top row of the diagram in Equation~\ref{eqn:splitR} splits and we have a map
\[ t_r: H^*(X_r) \to M.\]
\end{component}
\begin{component}[Convergence] Let $X = \inlim \tilde{X}_r$. We still need to show that 
\[ H^{*}(\inlim \tilde{X}_r) = \underset{\to}\lim H^{*}( \tilde{X}_r) \]
in order to conclude $H^{*}(X)= M$. This is true when $M$ is bounded below. The argument is standard and is known as Adams' Convergence Theorem in the literature (see \cite[Theorem~$2.1$]{Adams2} and \cite[Part III, Theorem~$15.1$]{Adams3} for details). 
\end{component}
\end{proof}

Now we briefly discuss some basic properties of the Adams tower and the dual Adams tower of a spectrum. Let $X$ be a $k$-connected spectrum, i.e. $\pi_{i}(X) =0$ for $i \leq k$. The Adams tower $\set{\tilde{X}_r: r \geq 0}$ and the dual Adams tower $\set{X_r: r \geq 0}$ of $X$ is said to be \emph{minimal} if it corresponds to a minimal free $A$-module resolution of $H^*(X)$. Because minimal free resolutions of $H^*(X)$ are nonunique, it follows that minimal Adams towers and dual Adams towers of $X$ are also nonunique. It follows from the construction that for a minimal Adams tower of $X$, we will have
\begin{equation} \label{eqn:Adamstower}
 Ext_{A}^{s,t}(H^*(\tilde{X}_r), H^*(Y)) \iso \left\lbrace \begin{array}{ccc}
 Ext_{A}^{s+r,t}(H^*(X), H^*(Y))  & \text{for $s \geq k+1$} \\
0 & \text{ for $s \leq k$}
 \end{array} \right.
\end{equation}
for any spectrum $Y$. The above isomorphism is realized by the map 
\[ \tilde{X}_r \to X.\]
Similarly, a minimal dual Adams tower satisfies 
\begin{equation} \label{eqn:Adamstower}
Ext_{A}^{s,t}(H^*(X_r), H^*(Y)) \iso \left\lbrace \begin{array}{ccc}
 Ext_{A}^{s,t}(H^*(X), H^*(Y) )  & \text{for $s \leq r-1 + k$} \\
0 & \text{otherwise}
 \end{array} \right.
\end{equation}
for an arbitrary spectrum $Y$.

Suppose we have a map of spectra $f:X \to Y$, where $X$ and $Y$ are both bounded below. Then $f$ induces a map between their Adams towers 
\[ 
\xymatrix{
\ast \ar[r]\ar@{=}[d] & \ldots \ar[r] & \tilde{X}_3 \ar[r] \ar[d]^{\tilde{f}_3} &\tilde{X}_2 \ar[r] \ar[d]^{\tilde{f}_2} & \tilde{X}_1 \ar[d]^{\tilde{f}_1} \ar[r] & X \ar[d]^{f}\\  
\ast \ar[r] & \ldots \ar[r]& \tilde{Y}_3 \ar[r] &\tilde{Y}_2 \ar[r] & \tilde{Y}_1 \ar[r]& Y  \\
}
\]
and their dual Adams towers
\[ 
\xymatrix{
X \ar[r]\ar[d]_{f} & \ldots \ar[r] & X_3 \ar[r] \ar[d]^{f_3} &X_2 \ar[r] \ar[d]^{f_2} & X_1 \ar[d]^{f_1} \ar[r] & \ast \ar@{=}[d]\\  
Y \ar[r] & \ldots \ar[r]& Y_3 \ar[r] &Y_2 \ar[r] & Y_1 \ar[r] & \ast.\\
}
\]
However, the collection of maps $\set{\tilde{f}_i: i \geq 0}$ and $\set{f_i: i \geq 0}$ may not be unique, even when the Adams tower and its dual are minimal. 



\section{Realization of $B_2$} \label{SEC:RealizeZ}
Let $B_2$ denote a fixed $A$-module structure on the $A(2)$-module $B(2)$. The main purpose of this section is to use Toda's realization theorem, Theorem~\ref{thm:todaR1}, to conclude: 
\begin{thm} \label{thm:existence}There exists a finite spectrum $Z \in \ZZ$ such that 
\[ H^*(Z) \iso B_2 \]
as an $A$-module.
\end{thm}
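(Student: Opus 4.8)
The plan is to verify the hypothesis of Toda's realization criterion, Theorem~\ref{thm:todaR1}, for the $A$-module $M = B_2$. That is, for each $n$ with $B_2^n \neq 0$ I must show that $Ext_A^{s,n+s-2}(B_2,\Ft) = 0$ for all $s \geq 3$. The natural tool is the algebraic Atiyah--Hirzebruch (or, more efficiently, the change-of-rings) machinery combined with the fact that $B_2$ restricts to $B(2)$ over $A(2)$, for which the relevant $Ext$ groups are completely computed in Lemma~\ref{lem:computations}. Concretely, I would pass to $A(2)$ via the (base-change / restriction) comparison and use that $Ext_{A(2)}^{s,t}(B(2),\Ft) \iso Ext_{E(Q_2)}^{s,t}(\Ft,\Ft) \otimes (\text{finite thing})$, which is a polynomial algebra on the class $v_2$ in bidegree $(1,7)$ tensored with a finite-dimensional $\Ft$-vector space concentrated in a bounded range of internal degrees. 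The point is that over $A(2)$ these groups are extremely sparse: along each line of slope determined by $t - s$, the classes $v_2^k$ force $t = 7s + (\text{bounded correction})$, so $Ext_{A(2)}^{s,n+s-2}(B(2),\Ft)$ can only be nonzero when $n - 2 + s \equiv 7s$ modulo the possible internal degrees of $B(2)$, i.e.\ when $6s$ is constrained to lie in a finite set. For $s \geq 3$ this is impossible once $n$ ranges only over the (finitely many, bounded) degrees where $B_2^n \neq 0$, because $B(2)$ is a finite complex concentrated in degrees $0$ through $23$.

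The main steps, in order, are: (1) record that $B_2$ is finite, bounded below, with top nonzero degree $23$, so the set of relevant $n$ is finite and explicit; (2) reduce the computation of $Ext_A^{s,t}(B_2,\Ft)$ in the needed range to $Ext_{A(2)}^{s,t}(B(2),\Ft)$ --- this works because the internal degrees appearing in $n + s - 2$ for $s \geq 3$ and $n \leq 23$ are small relative to $s$, and a minimal $A$-resolution and a minimal $A(2)$-resolution of $B_2$ agree through a range governed by connectivity, so the vanishing over $A(2)$ is what matters (one can phrase this via the edge map $Ext_A^{s,t}(B_2,\Ft) \to Ext_{A(2)}^{s,t}(B(2),\Ft)$ or via a minimal resolution argument analogous to the one in the proof of Theorem~\ref{thm:E2}); (3) invoke Lemma~\ref{lem:computations} (or rather its $\Ft$-coefficient analogue: $Ext_{A(2)}^{s,t}(B(2),\Ft) \iso \Ft[v_2]\otimes DB(2)$, with $v_2$ in bidegree $(1,7)$ and $DB(2)$ spread over internal degrees $-23$ through $0$), and observe that a class in filtration $s$ has internal degree $t = 7s + j$ with $-23 \leq j \leq 0$; (4) solve $n + s - 2 = 7s + j$, i.e.\ $n = 6s + 2 + j$, and check that for every $s \geq 3$ and every $j \in [-23,0]$ the resulting $n$ either is negative, or exceeds $23$, or otherwise fails to be a degree where $B_2$ is nonzero --- for $s = 3$ this gives $n \in [20 - 3, 20] = [-3,20]$... so here one genuinely has to look more carefully, matching $j$ (which is determined by which dual basis element of $B(2)$ one is over) against $n$; (5) conclude that the hypothesis of Theorem~\ref{thm:todaR1} holds and hence $B_2$ is realized by a spectrum $Z$, which lies in $\ZZ$ by Definition~\ref{defn:Z}.

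The delicate point --- and the step I expect to be the real obstacle --- is Step (4)/(5): for small $s$ (especially $s = 3$) the na\"ive degree count $n = 6s + 2 + j$ does not immediately land outside the range where $B_2$ is nonzero, so one must use the actual internal-degree distribution of $B(2)$ (equivalently of $DB(2)$) rather than just its diameter. One has to check that the specific pairs $(s,j)$ for which $n = 6s+2+j$ falls in $[0,23]$ are exactly the ones for which $Ext_{A(2)}^{s, 7s+j}(B(2),\Ft)$ happens to vanish for other reasons, or that such $n$ is not in the support of $B_2$. This is a finite check, best organized by drawing the $Ext_{A(2)}(B(2),\Ft)$ chart (which the paper's appendix provides via Bruner's program) and confirming that every class above filtration $2$ avoids the line $t - s = n - 2$ for $n$ in the support of $B_2$; the sparseness coming from the single periodicity generator $v_2$ of degree $7$ against a complex of diameter $23$ is what makes this possible, but it is not automatic and must be verified degree by degree. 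Once this chart-check is done, the rest is a direct appeal to Toda's theorem.
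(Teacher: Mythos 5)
Your overall plan — verify Toda's criterion (Theorem~\ref{thm:todaR1}) for $M=B_2$ by showing $Ext_A^{s,n+s-2}(B_2,\Ft)=0$ for $s\geq 3$ and $n$ in the support of $B_2$ — matches the paper's strategy. However, your step~(2), the proposed reduction of $Ext_A^{s,t}(B_2,\Ft)$ to $Ext_{A(2)}^{s,t}(B(2),\Ft)$ ``in a range governed by connectivity,'' is a genuine gap. Minimal $A$- and $A(2)$-resolutions of $B_2$ do \emph{not} agree in the relevant range: already in Adams filtration $1$, $Ext_A^{1,8}(B_2,\Ft)$ contains the class $h_3$ coming from $\zeta_1^8 \in (A\modmod A(2))_*$ (the first generator of $A$ beyond $A(2)$ lives in degree $8$), while $Ext_{A(2)}^{1,8}(B(2),\Ft)=0$ since $Ext_{A(2)}^{*,*}(B(2),\Ft)\iso Ext_{E(Q_2)}^{*,*}(\Ft,\Ft)=\Ft[v_2]$ is concentrated on the line $t=7s$. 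The paper handles this by running the algebraic-$\tmf$ spectral sequence, whose $E_1$-page $\bigoplus_n Ext_{A(2)}^{s-n,t}(\overline{A\modmod A(2)}^{\otimes n}\otimes B_2,\Ft)$ systematically records exactly these corrections in higher $\tmf$-filtration, and then checks (Figure~\ref{fig:tmfSSB}) that every $E_1$-contribution in Adams filtration $s\geq 3$ lies in stem $t-s\geq 21$, safely above the needed bound $14$. Nothing in your argument accounts for these higher-filtration contributions, so even though the conclusion you want is true, your degree count only treats the $\tmf$-filtration-$0$ part.

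There are also two smaller factual slips worth flagging. First, $B(2)=A(2)\modmod E(Q_2)$ is $32$-dimensional over $\Ft$ and spans degrees $0$ through $16$, not $0$ through $23$ (you may be thinking of $A(2)$ itself); the paper accordingly needs vanishing for $-2\leq t-s\leq 14$. Second, the formula $Ext_{A(2)}^{*,*}(B(2),\Ft)\iso\Ft[v_2]\otimes DB(2)$ you use in step~(3) is incorrect: the $DB(2)$ factor appears in $Ext_{A(2)}^{*,*}(B(2),B(2))$ (Lemma~\ref{lem:computations}(2)), not when the target is $\Ft$; the change-of-rings with $\Ft$ coefficients gives just $\Ft[v_2]$. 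Fixing this actually simplifies your degree count at the $A(2)$ level — no $j$-correction is needed — but it makes the omission of the $\overline{A\modmod A(2)}$-contributions all the more visible, since that is the only remaining source of classes in $Ext_A$.
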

For this we need to compute $Ext_{A}^{*,*}(B_2, \Ft)$. For any $A$-module $M$ there is a Bousfield-Kan spectral sequence 
\[ E_1^{s,t,n} = \bigoplus_{n} Ext_{A(2)}^{s-n,t}(\overline{A\modmod A(2)}^{\otimes n} \otimes M, \Ft) \Rightarrow Ext_{A}^{s,t}(M, \Ft)\] 
where $\overline{A \modmod A(2)}$ is the augmentation ideal, i.e. the kernel of the map $$A\modmod A(2) \to \Ft.$$ This spectral sequence is also otherwise known as the algebraic-$\tmf$ spectral sequence (see \cite{BHHM, BEM}). We will abbreviate the name to `alg-$\tmf$ SS' for the rest of the paper.
\begin{notn}To save space, we will suppress copies of $\Ft$ in Ext groups, abbreviating $Ext_{A(2)}^{*,*}(N, \Ft)$ to $Ext_{A(2)}^{*,*}(N)$ and $Ext_{E(Q_2)}^{*,*}(K, \Ft)$ to $Ext_{E(Q_2)}^{*,*}(K)$, where $N$ is an $A(2)$-module and $K$ is an $E(Q_2)$-module.  
\end{notn}
\begin{warn} 
The name `algebraic-$\tmf$ spectral sequence' is due to the fact that $H^*(\tmf) = A\modmod A(2)$. However, there are similar spectral sequences involving $A\modmod A(n)$ for $n\geq3$, despite the fact that these $A$-modules are not realizable topologically. We point this out so that readers are aware of the fact that the results in this paper do not rely on the theory of $\tmf$ per se, unlike some other results on $v_2$-self-maps of finite complexes (such as the results of \cite{BEM, BHHM, BP}).
\end{warn}
In \cite{DM82} (also see \cite[\S$5$ ]{BHHM}), it has been proved that as an $A(2)$-module 
\[ A\modmod A(2) \iso \bigoplus_{j \geq 0} \Sigma^{8j} N_1(j) \]
where $N_1(j)$ is the $j$-th Brown-Gitler module \cite{Goe83}. $N_1(0) \iso \Ft$ is precisely the image of the unit map.  As a result we have
\[ \overline{A\modmod A(2)} \iso \bigoplus_{j \geq 1} \Sigma^{8j} N_1(j) \]
and the $E_1$-page of the alg-$\tmf$ SS can be expressed as 
\[E_1^{s,t,n} =\bigoplus_{j_1 \geq 1, \dots, j_n \geq 1} Ext_{A(2)}^{s-n,8(j_1 + \dots +j_n)}(N_1(j_1) \otimes \dots \otimes  N_1(j_n) \otimes M).\]
We will refer to $s$ as the Adams filtration, $t$ as the internal degree and $n$ as the $\tmf$-filtration. Thus, the $d_r$ differentials have tridegree $(1,0,r)$.

In Figure~\ref{fig:tmfSS}, we provide a visual aid to assist the understanding of the $E_1$-page of the alg-$\tmf$ SS. We encode the $\tmf$-filtration using colors and express the spectral sequence in $(x,y) = (t-s,s)$ coordinates. We use black for $n = 0$, blue for  $n = 1$, red for  $n=2$ and green for  $n=3$. We draw the symbol $\arroww$ with a sequence of numbers $j_1 \dots j_k$ at $(t-s,s) = (8(j_1+\dots +j_k) -k, k)$ to indicate that we must place a shifted copy of $Ext_{A(2)}^{*,*}(N_1(j_1) \otimes \dots \otimes N_1(j_k) \otimes M)$ at that bidegree. By doing so, we assemble all the potential contributors to $Ext_{A}^{s,t}(M)$ in the $(t-s,s)$ coordinate system. With this arrangement, where we denote different alg-$\tmf$ filtrations using different colors, any differential in the alg-$\tmf$ SS looks like an Adams $d_1$ differential, pointing one unit up and one unit to the left.   


\begin{figure}[H] 
\begin{sseq}[entrysize=4.7mm, grid = chess]{26}{6}
 \ssdropbull \ssdroplabel[R]{} \ssvoidarrow {0} {1} \ssvoidarrow {2} {1}
 \bl
 \ssmoveto 7 1
 \ssdrop{$1$} \ssdroplabel{} \ssvoidarrow {0} {1} \ssvoidarrow {2} {1} 
 \ssmoveto {15} 1
 \ssdrop{$2$} \ssdroplabel{} \ssvoidarrow {0} {1} \ssvoidarrow {2} {1} 
 \ssmoveto {23} 1
 \ssdrop{$3$} \ssdroplabel{} \ssvoidarrow {0} {1} \ssvoidarrow {2} {1} 
 \red
 \ssmoveto {14} 2
 \ssdrop{$11$} \ssdroplabel{} \ssvoidarrow {0} {1} \ssvoidarrow {2} {1} 
 \ssmoveto {22} 2
 \ssdrop{$12$} \ssdroplabel{} \ssvoidarrow {0} {1} \ssvoidarrow {2} {1} 
 \ssmoveto {22} 2
 \ssdrop{$21$}\ssdroplabel{} \ssvoidarrow {0} {1} \ssvoidarrow {2} {1} 
 
 \green
 \ssmoveto {21} 3
 \ssdrop{$111$} \ssdroplabel{} \ssvoidarrow {0} {1} \ssvoidarrow {2} {1} 
 
 \gr
 \ssmoveto {28} 4
 \ssdropbull \ssdroplabel{} \ssvoidarrow {0} {1} 
 
 \end{sseq} 
\caption{A convenient pictorial description of the $E_1$-page of the alg-$\tmf$ SS} 
\label{fig:tmfSS}
\end{figure}


Now we estimate $Ext_A^{s,t}(B_2)$ using the alg-$\tmf$ SS. Since $B_2$ as an $A(2)$-module is isomorphic to $A(2) \modmod E(Q_2)$, we can apply a change of rings formula to see 
\begin{equation} \label{eqn:CofRB}
Ext_{A(2)}^{*,*}(N_1(j_1) \otimes \dots \otimes N_1(j_k) \otimes B_2) \iso Ext_{E(Q_2)}^{*,*}(N_1(j_1) \otimes \dots \otimes N_1(j_k)). 
\end{equation} 
As an $E(Q_2)$-module 
\[ N_1(1) \iso E(Q_2) \oplus \Sigma^4 \Ft \oplus \Sigma^6 \Ft \]
and 
\[ N_1(2) \iso E(Q_2) \oplus \bigoplus_{2\leq i \leq 4} \Sigma^{2i} E(Q_2) \oplus \bigoplus_{5 \leq i \leq 7} \Sigma^{2i} \Ft.\]

Computation of the $Ext$ groups on the RHS of \eqref{eqn:CofRB} is very tractable. Firstly, $E(Q_2)$ and $\Ft$ are the only indecomposable $E(Q_2)$-modules, which means that any $E(Q_2)$-module $M$ can be expressed as direct sums of shifted copies of $E(Q_2)$ and $\Ft$. Moreover, the fact that  
\begin{itemize}
\item $\Ft \otimes \Ft \iso \Ft$, 
\item $\Ft \otimes E(Q_2) \iso E(Q_2) \otimes \Ft \iso E(Q_2)$, and,
\item $E(Q_2) \otimes E(Q_2) \iso E(Q_2) \oplus \Sigma^7 E(Q_2)$,
\end{itemize}
allows us to express the tensor product $M \otimes N$ of two $E(Q_2)$-modules as a direct sum of indecomposable $E(Q_2)$-modules. Once we know the  indecomposable components of an $E(Q_2)$-module $M$, we can compute $Ext_{E(Q_2)}^{*,*}(M, \Ft)$ using the facts
\begin{itemize}
\item $Ext_{E(Q_2)}^{*,*}(E(Q_2)) \iso \Ft$
and 
\item $Ext_{E(Q_2)}^{*,*}(\Ft) \iso \Ft[v_2]$, where $v_2$ has bidegree $(s,t) = (1,7)$.
\end{itemize}

\begin{figure}[H] 
 \begin{sseq}[entrysize=4.7mm, grid = chess]{26}{6}
 \ssdropbull \vtwo \vtwo \vtwo \vtwo
 
 \bl
 \ssmoveto 7 1 
 \ssdrop{\circ} \ssname{s} \ssdroplabel[U]{h_3}
 \ssmoveto {11} 1  
 \ssdropbull  \ssdroplabel[D]{h_{2,2}}\vtwo \vtwo
 \ssmoveto {13} 1 
 \ssdropbull \vtwo \vtwo
 \ssmoveto {15} 1 
 \ssdrop{\circ}
 \ssmoveto {19} 1 
 \ssdrop{\circ}
 \ssmoveto {21} 1 
 \ssdrop{\circ}
 \ssmoveto {23} 1 
 \ssdrop{\circ}
 \ssmoveto {25} 1 
 \ssdropbull
 
 \red 
 \ssmoveto {14} 2
 \ssdrop{\circ} \ssname{s} \ssdroplabel[U]{h_3^2}
 \ssmoveto {21} 2
 \ssdrop{\circ} 
 \ssmoveto {22} 2
 \ssdropbull \ssname{h}\ssdrop{\circ} \ssdrop{\circ}
 
 \green
 \ssmoveto {21} 3 
 \ssdrop{\circ} \ssname{s} \ssdroplabel[U]{h_3^3}
 \black
 \ssgoto h \ssgoto s \ssstroke[arrowto]
 
\end{sseq} 
\caption{ The $E_1$-page of the alg-$\tmf$ SS for $B_2$} 
\label{fig:tmfSSB}
\end{figure}

In Figure~\ref{fig:tmfSSB} we plot the first $25$ stems of the $E_1$-page of the alg-$\tmf$ SS, annotating certain elements with their May names. Note that the $E_1$-page is a module over $Ext_{E(Q_2)}^{*,*}(\Ft) = \Ft[v_2],$ hence admits a $v_2$-action. We use a $\circ$ to denote an element on which $v_2$ acts trivially, otherwise we use a $\bullet$. We use dotted lines to indicate the $v_2$-action. We will indicate the $\tmf$ filtration
using the same color code as Figure~\ref{fig:tmfSS}.  
\begin{proof}[Proof of Theorem~\ref{thm:existence}] $B_2$ is nonzero in dimensions $0$ through $16$. By Theorem~\ref{thm:todaR1}, it is enough to prove 
\[Ext_A^{s, s+n-2}(M) =0\]
for $s \geq 3$ for $0 \leq n \leq 16$, equivalently, 
\[ Ext_A^{s, t}(M) =0\]
for $s \geq 3$ and $-2 \leq t-s \leq 14$. From, Figure~\ref{fig:tmfSSB} it is clear that in the $E_1$-page of alg-$\tmf$ SS  $E_1^{s,t,n} =0$ for $s \geq 3$ and $-2 \leq t-s \leq 14$ for all $n \in \mathbb{N}$, and hence the result follows.  
\end{proof}
While we have proved what we set out to prove in this section, we would like to justify the May names and the differential in stem $22$ of Figure~\ref{fig:tmfSSB}, as it will be crucial for the proof of our main theorem in Section \ref{SEC:mainproof}. To do this, we must delve deeper into the alg-$\tmf$ SS, identifying the elements by the names they would inherit from the May spectral sequence. We first recall the May filtration of the Steenrod algebra. 

The May filtration, introduced by J.P. May \cite{May}, can be easily described as a decreasing filtration of the dual Steenrod algebra $A_*$ \cite{Rav}, \cite{Ko}.  The \emph{May weight} $w$ of $\xi_i^{2^j}$ is $2i-1$. In general 
\[ w(\xi_{i_1}^{j_1} \dots \xi_{i_n}^{j_n}) = \sum_{k=1}^n (2i_k -1) \alpha(j_k)\]
where  $\alpha(j_k)$ is the number of $1$'s in the $2$-adic expansion of $j_k$.  The associated graded of $A_*$ is a Hopf algebra, which is primitively generated by $\xi_{i,j}$, the image of $\xi_i^{2^j}$ in the associated graded. Consequently we have a filtration of the cobar complex $C(\Ft, A_*, \Ft)$, resulting in a spectral sequence 
\[ E_1^{s,t, w} = \Ft[h_{i,j}: i \geq 1, j \geq 0] \Rightarrow Ext_{A_*}^{s,t}(\Ft, \Ft) \iso Ext_{A}^{s,t}(\Ft, \Ft)  \]
where $h_{i,j} = [\xi_{i,j}]$. This spectral sequence is called the May spectral sequence in the literature.  
\begin{notn}For any $A$-module $M$ we will denote the $E_r$-page of the alg-$\tmf$ SS by $E_r^{s,t,n}[M]$.\end{notn}

Note that we have already established that (see Theorem~\ref{thm:E2}) there is a map 
\[ \overline{v}_2:  \Sigma^{-1,0} B_2 \to \Sigma^{0,7} B_2\]
in the derived category of $A$-modules, whose cofiber is $A_2$, an $A$-module whose underlying $A(2)$-structure is a free copy of $A(2)$.
Note that 
\[ E_1^{s,\ast, n}[A_2] = Ext_{A(2)}^{s-n,\ast}(A(2) \otimes \overline{A \modmod A(2)}^{\otimes n} ) \iso Ext_{\Ft}^{s-n,\ast}( \overline{A \modmod A(2)}^{\otimes n} ) \]
which is isomorphic to 
\[ \overline{(A \modmod A(2))_*}^{\otimes n}\]
when $s = n$  and $0$ otherwise. Consequently, the $E_1$-page of the alg-$\tmf$ SS for $A_2$ (restricted to the part $s=n$) is isomorphic to a sub-complex of $\overline{C^{\ast, \ast}(\Ft, A_*, \Ft)}$ (see Remark~\ref{mayZ}) which we denote by  
\[ \overline{C^{\ast, \ast}(\Ft, (A\modmod A(2))_*, \Ft)}. \]
Moreover, for degree reasons, the alg-$\tmf$ SS collapses at the $E_2$-page, i.e.\linebreak$E_2^{s,t, s}[A_2] = Ext_{A}^{s,t}(A_2)$.  Recall that $(A \modmod A(2))_* = \Ft[\zeta_1^8, \zeta_2^4, \zeta_3^2, \zeta_4, \zeta_5, \dots ]$, where $\zeta_i$ are the anti-automorphic images of $\xi_i$. $(A \modmod A(2))_*$ inherits the May filtration from $A_*$, likewise $\overline{C^{\ast, \ast}(\Ft, (A\modmod A(2))_*, \Ft)}$ inherits the May filtration from $\overline{C^{\ast, \ast}(\Ft, A_*, \Ft)}$.  Thus we have a May spectral sequence calculating the $d_1$-differential of the alg-$\tmf$ SS for $A_2$
\[ E_1^{*,*,*, *} = \Ft[h_{i,j}: i+j \geq 4, i \geq 1, j \geq 0] \Rightarrow E_2^{*,*,*}[A_2] \iso Ext_{A}^{*,*}(A_2). \]
Therefore we can assign May names to elements of $Ext_{A}^{*,*}(A_2)$.  From \eqref{eqn:CofRB} it is clear that every element $x$ in the $E_1$-page of the alg-$\tmf$ SS for $B_2$ satisfies either $v_2^i \cdot x \neq 0$ for all $i>0$ or $v_2^1 \cdot x = 0$. Therefore the map 
\[\overline{q}_2: E_1^{s,t,n}[B_2]  \to E_1^{s,t,n}[A_2],\] 
induced by $q_2:A_2 \to B_2$, is injective when restricted to the subcomplex $E_1^{s,*,s}[B_2]$. Therefore we can assign May names to the those elements in $E_1^{s,t,n}[B_2]$ for which $s = n$. 

The $A_*$-comodule 
\[ \Sigma^{8}N_1(1)_* \subset (A\modmod A(2))_* = \Ft[\zeta_1^8, \zeta_2^4, \zeta_3^2, \zeta_4, \zeta_5, \dots]\]
consists of the elements $\set{\zeta_1^8, \zeta_2^4, \zeta_3^2, \zeta_4}$ (see \cite{DM82, BHHM} for details). The red bullet 
\[ {\color{red}{\bullet}}\in E_1^{2,2+22,2}[B_2] \]
is the contribution of a generator in $Ext_{E(Q_2)}^{0,24}(\Sigma^8N_1(1)\otimes\Sigma^8N_1(1))$ which corresponds to the cobar element $[\zeta_2^4|\zeta_2^4]$. Therefore its  May name is $h_{2,2}^2$. Similarly, the green circle 
\[ {\color{green}{\circ}}\in E_1^{3,3+21,3}\] 
is the contribution of a generator in $Ext_{E(Q_2)}^{3-3,24}(\Sigma^8N_1(1)\otimes\Sigma^8N_1(1)\otimes\Sigma^8N_1(1))$ which corresponds to 
$[\zeta_1^8|\zeta_1^8|\zeta_1^8]$, hence has the May name $h_{3}^3$. By work of Tangora \cite{Tan}, which is also exposed in \cite{Ko}, one has 
 \begin{equation} \label{eqn:diffsigma3}
  d_2(h_{2,2}^2) = h_3^3
 \end{equation}
in the May spectral sequence, which explains the differential in Figure~\ref{fig:tmfSSB}.
\begin{rem} \label{mayZ} Note that $A(2)$ is not a normal subalgebra of the 
Steenrod algebra $A$, so $(A \modmod A(2))_*$ is not a Hopf algebra. Therefore, one \emph{cannot} make sense of  a cobar construction $C^{\ast, \ast}(\Ft,(A \modmod A(2))_*, \Ft )$ in the conventional sense.  However, using the fact that $\tmf \sma A_2 \simeq H\Ft$, we see that there is a map from the $\tmf$-resolution of $A_2$ 
\[  
\xymatrix{
A_2 \ar[d] & \ar[l]  \overline{\tmf} \sma A_2 \ar[d]  & \ar[l] \dots   \overline{\tmf}^{\sma n} \sma A_2 \ar[d] & \ar[l] \dots  \\
\tmf \sma A_2 &  \tmf \sma \overline{\tmf} \sma A_2&  \tmf \sma \overline{\tmf}^{\sma n} \sma A_2
}
\]
to the Adams resolution for the sphere spectrum $S^0$ 
\[  
\xymatrix{
S^0 \ar[d] & \ar[l]  \overline{H\mathbb{F}_2} \ar[d]  & \ar[l] \dots  \overline{H\mathbb{F}_2}^{\sma n} \ar[d] & \ar[l] \dots  \\
H\mathbb{F}_2 &  H\mathbb{F}_2 \sma \overline{H\mathbb{F}_2} &  H\mathbb{F}_2 \sma \overline{H\mathbb{F}_2}^{\sma n}. 
}
\]
Consequently we have an injective map from the $E_1$-page of the $\tmf$-based Adams spectral sequence of $A_2$ to the reduced cobar complex for $S^0$
\[ E_1^{n, \ast}[A_2] = \pi_*(\tmf \sma \overline{\tmf}^{\sma n} \sma A_2) \iso (\overline{A\modmod A(2)})_*^{\otimes n} \hookrightarrow \overline{A}_*^{\otimes n} = \overline{C^{n, \ast}(\Ft, A_*, \Ft)} \]
which commutes with the differentials. Thus $ E_1^{\ast, \ast}[A_2]$ is isomorphic to a subcomplex of $\overline{C^{\ast, \ast}(\Ft, A_*, \Ft)}$, which we denote by $\overline{C^{\ast, \ast}(\Ft,(A \modmod A(2))_*, \Ft )}$. The same can be concluded for $E_1$-page of the alg-$\tmf$ SS for $A_2$ because it is isomorphic to the $E_1$-page of the $\tmf$-based Adams spectral sequence for $A_2$.  
\end{rem}

\section{Nonuniqueness of $Z$} \label{SEC:nonuniqueness}
In this section we prove that the spectra $Z \in \ZZ$ realizing a given $A$-module $B_2$ are never unique, even up to homotopy. In fact, a given $A$-module $B_2$ can be realized by either 4 or 8 homotopically different spectra in $\ZZ$. 
To motivate the proof, we first give a sufficient condition for a spectrum $X$ under which the $A$-module structure of $H^*(X)$ determines $X$ uniquely up to homotopy. 
\begin{prop} \label{prop:unique}Let $X$ be any finite spectrum, and let $M$ denote the $A$-module $H^*(X)$. If 
\[ Ext_A^{s,s-1}(M,M) = 0\]
for every $s \geq 2$, then every $H\mathbb{F}_2$-nilpotently complete spectrum $Y$ such that $H^*(Y) \iso M$ as an $A$-module, is weakly equivalent to $X$.
\end{prop}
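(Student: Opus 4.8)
The plan is to use the dual Adams tower of $X$ (which exists since $X$ is bounded below, being finite) together with the vanishing hypothesis to build an equivalence $X \simeq Y$ by constructing maps between the respective towers level by level. First I would fix a minimal free $A$-module resolution of $M = H^*(X)$; since $H^*(Y) \iso M$, the same resolution serves for $Y$, and by the construction reviewed in Section~\ref{SEC:TodaRealize}, both $X$ and $Y$ (after $p$-completion, which is harmless here since everything is $2$-local and finite) arise as $\inlim$ of dual Adams towers $\set{X_r}$ and $\set{Y_r}$ built from this one resolution, with the same GEMs $K_r$ in each tower. The goal is to produce a compatible family of equivalences $X_r \to Y_r$.

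Next I would set up the inductive construction. At the bottom, $X_0 = Y_0 = \ast$, and $X_1 = \Sigma^{-1}K_0 = Y_1$, so we may take the identity there. Assume inductively that we have an equivalence $\phi_r: X_r \to Y_r$ compatible with the tower maps down to level $1$. We want to lift it to $\phi_{r+1}: X_{r+1} \to Y_{r+1}$, where $X_{r+1}$ is the fiber of $k_r^X: X_r \to K_r$ and $Y_{r+1}$ is the fiber of $k_r^Y: Y_r \to K_r$. The map $k_r^Y \circ \phi_r$ and the map $k_r^X$ are two maps $X_r \to K_r$; if they agree, $\phi_r$ lifts to the fibers. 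They need not agree on the nose, but their difference is a map $X_r \to K_r$ which, because $K_r$ is a GEM, is detected by an element of $H^*(X_r)$ in the appropriate degree. Using the splitting $H^*(X_r) \iso M \oplus (\Sigma^{\ast}\ker d)$ from the dual Adams tower (cf. the discussion in Section~\ref{SEC:TodaRealize}) and the freedom to alter $k_r^X$ by precomposing with automorphisms coming from $\Hom_A(M, \Sigma^{\ast} F_r)$-classes — exactly the kind of alteration permitted in the construction of the tower — one shows that the obstruction to making the two maps homotopic lives in $Ext_A^{s,s-1}(M,M)$ for some $s \geq 2$, which vanishes by hypothesis. Hence $\phi_r$ lifts, and one checks the lift can be chosen to be an equivalence (it induces an isomorphism on mod $2$ cohomology in each degree by the five lemma applied to the cofiber sequences, since it is compatible with the identity on $K_r$ and with $\phi_r$).

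Finally I would pass to the limit: taking $X = \inlim X_r$ and $Y = \inlim Y_r$, the compatible family $\set{\phi_r}$ induces a map $X \to Y$ which is an isomorphism on mod $2$ cohomology (this is where boundedness below of $M$ is used, via Adams' convergence theorem, so that $H^*(\inlim) = \colim H^*$), hence a $2$-adic equivalence; as both spectra are already $2$-local finite complexes this is a genuine weak equivalence.

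The main obstacle is the second step: pinning down precisely that the obstruction to homotoping $k_r^Y \circ \phi_r$ to $k_r^X$ lands in $Ext_A^{s,s-1}(M,M)$ with $s \geq 2$, and verifying that the alterations we are allowed to make to the structure maps of the dual Adams tower suffice to kill any coboundary part, so that only the genuine $Ext$ class is the true obstruction. This requires carefully tracking degrees through the dual Adams tower and using minimality of the resolution exactly as in the proof of Theorem~\ref{thm:todaR2}. I expect this is where the hypothesis is consumed, and it is essentially a relative version of the realization argument already given, now comparing two realizations rather than building one.
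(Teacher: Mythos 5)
Your approach is genuinely different from the one the paper takes, and it is considerably more elaborate. The paper's proof is short: run the Adams spectral sequence
\[ E_2^{s,t} = Ext_A^{s,t}(H^*(Y), H^*(X)) \Rightarrow [X,Y]_{t-s}, \]
observe that since $H^*(X) \iso H^*(Y) \iso M$, the $E_2$-page in the relevant tridegrees is $Ext_A^{s,s-1}(M,M)$, note that the generator $g \in E_2^{0,0}$ corresponding to the given isomorphism has $d_r(g) \in E_r^{r,r-1}$, which vanishes for every $r \geq 2$ by hypothesis, so $g$ survives and is realized by a map $\overline{g}\colon X \to Y$ inducing an iso on cohomology, whence Whitehead's theorem finishes. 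Your plan is the tower-level ``unpacking'' of this spectral sequence argument: you build compatible equivalences $\phi_r\colon X_r \to Y_r$ up the dual Adams towers and identify the obstruction to extending $\phi_r$ to $\phi_{r+1}$ with a class in $Ext_A^{s,s-1}(M,M)$. This is close in spirit to what the paper actually does in Theorem~\ref{thm:nonunique} (where $d_2(\overline{\iota}) = \overline{\delta}$ is identified with the difference $l_2 - k_2$ of two choices of structure map), and the idea is sound, but you yourself flag the hard step — rigorously locating the obstruction and showing the permitted alterations absorb coboundaries — as unfinished, whereas the paper sidesteps all of this by working with the spectral sequence rather than the tower. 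The spectral sequence argument buys brevity and avoids the bookkeeping you anticipate; your argument would buy an explicit construction of the equivalence. One further small caution: both approaches really produce a $2$-adic equivalence a priori (the Adams SS converges to $[X,Y^{\wedge}_2]$, and your towers converge to $2$-completions), so the passage to a genuine weak equivalence of $2$-local finite spectra deserves a word — though for the type~$2$ spectra $Z$ to which this proposition is applied, the homotopy groups are finite and the completion is the identity, so the point is harmless in context.
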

\begin{proof} Consider the Adams spectral sequence 
\[E_2^{s,t} = Ext_{A}^{s,t}(H^{*}(Y), H^{*}(X)) \Rightarrow [X,Y]_{t-s}.\]
Since $H^*(X) \iso H^*(Y) \iso M$, the $E_2$ page is isomorphic to $Ext_A^{s,s-1}(M, M)$. Let $g$ denote a generator in bidegree $(0,0)$ which corresponds to the isomorphism $H^*(X) \iso H^*(Y)$ of $A$-modules. Since $Ext_{A}^{r,r-1}(M, M) = 0$ for $r \geq 2$, it follows that 
\[ d_r(g)  = 0. \] 
Thus $g$ realizes a topological map 
\[\overline{g}: X \to Y\]
which induces an isomorphism in cohomology. Therefore, by Whitehead's theorem $X$ is weakly equivalent to $Y$.
\end{proof}
Therefore, to address the uniqueness question we must compute $Ext_{A}^{r,r-1}(B_2,B_2)$ for $r \geq 2$. We will use the alg-$\tmf$ SS to do so. As an $A(2)$-module  $B_2 \iso A(2) \modmod E(Q_2)$, therefore  a change of rings formula implies 
\[ Ext_{A(2)}^{*,*}(N_1(j_1) \otimes \dots \otimes N_1(j_k) \otimes DB_2 \otimes B_2) \iso Ext_{E(Q_2)}^{*,*}(N_1(j_1) \otimes \dots \otimes N_1(j_k) \otimes DB_2).\]
Since
\[ DB(2) \iso \bigoplus_{c \in D\G} \Sigma^{|c|}\Ft\]
 as an $E(Q_2)$-module, we have 
\[ 
\xymatrix{
E_1^{s,t,n}[DB_2 \otimes B_2]  \ar[d]^{\iso}\\
\underset{ \ \ \ j_1 \geq 0, \dots, j_n \geq 0}\bigoplus  Ext_{E(Q_2)}^{s-n,8(j_1 + \dots +j_n)}(N_1(j_1) \otimes \dots \otimes  N_1(j_n) \otimes DB_2)  \ar[d]^{\iso} \\ 
\underset{ \ \ j_1 \geq 0, \dots, j_n \geq 0}\bigoplus  Ext_{E(Q_2)}^{s-n,8(j_1 + \dots +j_n)}(N_1(j_1) \otimes \dots \otimes  N_1(j_n) \otimes \underset{c \in D\G} \bigoplus \Sigma^{|c|}\Ft)  \ar[d]^{\iso} \\ 
\underset{c \in D\G}\bigoplus (\underset{ \ \ \ j_1 \geq 0, \dots, j_n \geq 0}\bigoplus  Ext_{E(Q_2)}^{s-n,8(j_1 + \dots +j_n)+|c|}(N_1(j_1) \otimes \dots \otimes  N_1(j_n))) \ar[d]^{\iso} \\ 
\underset{c \in D\G}\bigoplus  E_1^{s,t +|c|,n}[B_2].
}
\]
In other words,  $E_1^{*,*,*}[DB_2 \otimes B_2]$  is a  direct sum of shifted copies of $E_1^{*,*,*}[B_2]$, one for each generator of $DB_2$. We computed $E_1^{*,*,*}[B_2]$ in Section~\ref{SEC:RealizeZ} and displayed it in Figure~\ref{fig:tmfSSB}. 
\begin{figure}[H] 
  \[ \begin{sseq}[entrysize=10mm, grid = chess]{-2...2}{4}
  \ssmoveto {-2} {0}
  \ssdrop{\ast} 
  \ssmoveto {-1} {0}
  \ssdrop{\ast}
  \ssmoveto {0} {0}
  \ssdropbull\ssdroplabel[R]{\overline{\iota}} \ssname{g}
  \ssmoveto{-2} 1
  \ssdrop{\ast} 
  \ssmoveto{-1} 1
  \ssdrop{\ast} 
  \ssmoveto{0} 1
  \ssdrop{\ast} 
  \ssmoveto{1} 1
  \ssdrop{\ast} 
  \ssmoveto{2} 1
  \ssdrop{\ast} 
  \ssmoveto {-2} 2 
  \ssdrop{\ast}
  \ssmoveto {-1} 2 
  \red \ssdrop{\circ} \black \ssdropbull \ssdropbull
  \ssmoveto {0} 2
  \ssdrop{\ast}
  \ssmoveto {1} 2
  \ssdrop{\ast}
  \ssmoveto {2} 2
  \ssdrop{\ast}
  \ssmoveto {3} 2
  \ssdrop{\ast}
  \ssmoveto{-1} 2
  \ssdrop{} \ssname{a}
 \ssgoto g \ssgoto a \ssstroke[arrowto,dotted]
 \end{sseq} \]
\caption{ The $E_1$-page of the alg-$\tmf$ SS for $DB_2 \otimes B_2$} 
\label{fig:tmfSSBB}
\end{figure}

\begin{notn}  We know that $H_*(Z) \iso \Ft[\xi_1,\xi_2]/ (\xi_1^8, \xi_2^4 )$ as an $A(2)_*$-comodule. 
Let $g_{\xi_1^{i_1}\xi_2^{i_2}}$ be the element in $H^*(Z)$ dual to $\xi_1^{i_1}\xi_2^{i_2}$. We conveniently choose 
\[ \Gen = \set{g_{\xi_1^{i_1}\xi_2^{i_2}}: 0 \leq i_1 \leq 7, 0 \leq i_2 \leq 3}.\]
We denote the element of $D\Gen$, which is Spanier-Whitehead dual to $g_{\xi_1^{i_1}\xi_2^{i_2}}$, by  $\overline{g}_{\xi_1^{i_1}\xi_2^{i_2}}$.
\end{notn}
In Figure~\ref{fig:tmfSSBB} we draw the alg-$\tmf$ SS for $DB_2 \otimes B_2$ for $0 \leq s \leq 3$ and $-2 \leq t-s \leq 2$. We place a $\ast$ in bidegree $(s,t)$ if $E_1^{s,t,*}[DB_2 \otimes B_2] \neq 0$ but irrelevant to this discussion.  In bidegree $(s,t) = (2, 2-1)$, the two bullets ($\bullet$) are $v_2^2\cdot\overline{g}_{\xi_1^7\xi_2^2}$ and $v_2^2\cdot \overline{g}_{\xi_1^4\xi_2^3}$, and the circle (\textcolor{red}{$\circ$}) is $h_3^2 \cdot \overline{g}_{\xi_1^6\xi_2^3}$.
\begin{lem}  \label{lem:4or8}
Let $B_2$ denote any $A$-module structure on $B(2)$. Then  
\[ \dim_{\Ft}(Ext_A^{2,1}(B_2 , B_2)) = \text{$2$ or $3$}\]
depending on the $A$-module structure on $B_2$.
\end{lem}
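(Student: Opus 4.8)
\textbf{Proof strategy for Lemma~\ref{lem:4or8}.}
The plan is to compute $Ext_A^{2,1}(B_2,B_2)$ by running the alg-$\tmf$ SS for $DB_2 \otimes B_2$ in total degree $t-s = -1$ and Adams filtration $s=2$, starting from the $E_1$-page displayed in Figure~\ref{fig:tmfSSBB}. The point of the reduction carried out just before the lemma is that $E_1^{*,*,*}[DB_2 \otimes B_2]$ is a direct sum, indexed by $c \in D\Gen$, of $|c|$-shifted copies of $E_1^{*,*,*}[B_2]$, which was computed in Section~\ref{SEC:RealizeZ}. So the first step is to read off from this decomposition exactly which shifted copies of Figure~\ref{fig:tmfSSB} contribute in bidegree $(s,t)=(2,1)$; as recorded after Figure~\ref{fig:tmfSSBB}, these are the three classes $v_2^2 \cdot \overline{g}_{\xi_1^7\xi_2^2}$, $v_2^2 \cdot \overline{g}_{\xi_1^4\xi_2^3}$ (both black, $\tmf$-filtration $0$) and $h_3^2 \cdot \overline{g}_{\xi_1^6\xi_2^3}$ (red, $\tmf$-filtration $2$), so $E_1^{2,1,*}[DB_2\otimes B_2]$ is $3$-dimensional.

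Next I would show that the two $v_2^2$-classes are permanent cycles that survive to $E_\infty$, contributing $2$ to the dimension of $Ext_A^{2,1}(B_2,B_2)$ regardless of the $A$-module structure. The cleanest way is to observe that multiplication by $\overline{v}_2 \in Ext_A^{1,7}(B_2,B_2)$ (produced in Theorem~\ref{thm:E2}) already detects these classes: the elements $\overline{g}_{\xi_1^7\xi_2^2}$ and $\overline{g}_{\xi_1^4\xi_2^3}$ live in $Ext_A^{0,*}(B_2,B_2)$ in total degrees where $v_2^2$-multiplication lands in $(s,t)=(2,1)$ (note $|\overline{g}_{\xi_1^7\xi_2^2}| = -(3\cdot 1 + 2\cdot 3 + \ldots)$ works out so that $|v_2^2 x| = t-s = -1$), and $\overline{v}_2$-periodicity of $Ext_A(B_2,B_2)$ forces these to be nonzero permanent cycles. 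Equivalently, since $\overline{q}_2$ injects on the $s=n$ subcomplex, one can track these in $E_1[A_2]$ and note there is nothing in the relevant degrees for them to map to under a differential. So the only question is the fate of the single red class $h_3^2 \cdot \overline{g}_{\xi_1^6\xi_2^3}$.

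The main obstacle, then, is deciding whether this red class survives or is killed, and this is precisely where the $A$-module structure on $B_2$ enters. The relevant differential is an alg-$\tmf$ $d_r$ of tridegree $(1,0,r)$ hitting or emanating from $h_3^2 \cdot \overline{g}_{\xi_1^6\xi_2^3}$; by the color/filtration bookkeeping the only possibilities are a $d_2$ or $d_3$ connecting it to a black or blue class. Using the identification of the $s=n$ subcomplex with the cobar complex for $A_2$ and the May-name analysis from Section~\ref{SEC:RealizeZ} — in particular the Tangora differential $d_2(h_{2,2}^2) = h_3^3$ of \eqref{eqn:diffsigma3} — one sees that whether such a differential is present is governed by whether a certain Massey-product / cobar expression involving the $A$-action on $B(2)$ beyond $A(2)$ vanishes; this is exactly the kind of structure-dependent phenomenon already flagged in Remark~\ref{1600structures}. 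So I would argue: if that differential is present, $E_\infty^{2,1} = E_\infty^{2,1}[\text{two } v_2^2\text{-classes}]$ has dimension $2$; if it is absent (and there is no incoming differential, which holds for degree reasons in filtration $s=3$), then $E_\infty^{2,1}$ has dimension $3$. In both cases there are no hidden extension issues because $Ext_A^{2,1}$ is a vector space and $\dim_{\Ft}$ is just the sum of the $E_\infty$ dimensions, yielding $\dim_{\Ft} Ext_A^{2,1}(B_2,B_2) \in \{2,3\}$ as claimed. The proof should conclude by noting that, by Proposition~\ref{prop:unique} together with a standard obstruction-theoretic count, this dimension controls the number of homotopy types realizing $B_2$, which is either $2^2=4$ or $2^3 = 8$.
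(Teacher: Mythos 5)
Your overall skeleton is the same as the paper's: identify the three $E_1$-classes $v_2^2\cdot\overline{g}_{\xi_1^7\xi_2^2}$, $v_2^2\cdot\overline{g}_{\xi_1^4\xi_2^3}$, $h_3^2\cdot\overline{g}_{\xi_1^6\xi_2^3}$ in $(s,t)=(2,1)$, argue that the first two survive, and observe that the third may or may not, giving $\dim\in\{2,3\}$. But your argument for \emph{why} the two $v_2^2$-classes are permanent cycles does not hold up, and this is the crux of the lemma. You appeal to ``$\overline{v}_2$-periodicity of $Ext_A(B_2,B_2)$'' and claim $\overline{g}_{\xi_1^7\xi_2^2}$, $\overline{g}_{\xi_1^4\xi_2^3}$ ``live in $Ext_A^{0,*}(B_2,B_2)$''. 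Neither is correct: $Ext_A^{0,0}(B_2,B_2)\cong\Ft$ is generated only by the identity, so these $\overline{g}$-symbols are purely bookkeeping names coming from the $E(Q_2)$-decomposition and do not exist as elements of $Ext_A^{0,*}$; and $\overline{v}_2$-periodicity of the whole $Ext_A$ is not established anywhere (indeed it would essentially presuppose the $v_2$-self-map). Your fallback to ``$\overline{q}_2$ injects on the $s=n$ subcomplex'' also does not apply: the two $v_2^2$-classes sit at $(s,n)=(2,0)$, not $s=n$, so that injectivity statement says nothing about them.

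The paper's argument is much more elementary and does not require any of this machinery. Differentials in the alg-$\tmf$ SS have tridegree $(1,0,r)$ with $r\geq 1$. Since $E_1^{3,1,*}[DB_2\otimes B_2]=0$ (nothing in bidegree $s=3$, $t-s=-2$ in Figure~\ref{fig:tmfSSBB}), \emph{none} of the three classes in $E_1^{2,1,*}$ can support an outgoing differential. The two $v_2^2$-classes are in $\tmf$-filtration $n=0$, and differentials strictly raise $n$, so they cannot be targets either; hence they survive to $E_\infty$ unconditionally. The class $h_3^2\cdot\overline{g}_{\xi_1^6\xi_2^3}$ is in $\tmf$-filtration $2$, so it \emph{can} be hit by a differential from filtration $n<2$, and this is the only thing that depends on the $A$-module structure. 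That yields $\dim_{\Ft}Ext_A^{2,1}(B_2,B_2)\in\{2,3\}$ directly. The Tangora differential $d_2(h_{2,2}^2)=h_3^3$ and the cobar/May-name analysis you invoke belong to the bidegree $(s,t-s)=(3,5)$ computation used later in Section~\ref{SEC:mainproof}; they are not relevant to the $(2,-1)$ bidegree of this lemma. Finally, your closing sentence about the count $2^n$ is the content of Theorem~\ref{thm:nonunique}, not of this lemma, and in any case cannot be reduced to Proposition~\ref{prop:unique} alone since that proposition only gives a sufficient condition for uniqueness.
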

\begin{proof}
The elements $\set{v_2^2 \cdot \overline{g}_{\xi_1^7\xi_2^2}$, $v_2^2 \cdot \overline{g}_{\xi_1^4\xi_2^3}, h_3^2 \cdot \overline{g}_{\xi_1^6\xi_2^3}}$ cannot support a nontrivial differential in the alg-$tmf$ SS  as $$E_1^{s,s-2,*}[DB_2 \otimes B_2] = 0$$ for all $s = 3$. Moreover, $v_2^2 \cdot \overline{g}_{\xi_1^7\xi_2^2}$ and $v_2^2 \cdot \overline{g}_{\xi_1^4\xi_2^3}$ cannot be a target of  
 a differential in the alg-$\tmf$ SS, as they are in algebraic-$tmf$ filtration zero. Therefore, $v_2^2 \cdot\overline{g}_{\xi_1^7\xi_2^2}$ and $v_2^2 \cdot \overline{g}_{\xi_1^4\xi_2^3}$ are present in $Ext_{A}^{2,1}(B_2, B_2)$.  However, $h_3^2 \cdot \overline{g}_{\xi_1^6\xi_2^3}$ can be a target of a differential in the alg-$\tmf$ SS for $B_2$. 
\end{proof}
\begin{rem} \label{rem:fourrealizations}In fact, for the specific $A$-module structure on $B(2)$ which is worked out in Appendix~\ref{SEC:Appendix1} (see Figure~\ref{ZDZ_1}), we see that 
\[ \dim_{\Ft}(Ext_A^{2,1}(B_2 , B_2)) = 2,\] 
which means that $h_3^2 \cdot \overline{g}_{\xi_1^6\xi_2^3}$ is trivial in $Ext_{A}^{2,1}(B_2, B_2)$. Though the authors are not aware of  an $A$-module structure on $B(2)$ for which $h_3^2 \cdot \overline{g}_{\xi_1^6\xi_2^3}$ is nontrivial in $Ext_{A}^{2,1}(B_2, B_2)$, one cannot rule out such a scenario as we have not been able to get a handle on all possible $A$-module structures on $B(2)$.
\end{rem}
\begin{thm} \label{thm:nonunique} For an $A$-module $B_2$ whose underlying $A(2)$-module is isomorphic to $B(2)$, let 
\[ n= \dim_{\Ft}(Ext_A^{2,1}(B_2 , B_2)).\]
Then there are $2^n$ different homotopy types of spectra $Z \in \ZZ$ such that 
\[ H^*(Z) \iso B_2\]
as an $A$-module.
\end{thm}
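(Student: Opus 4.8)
The plan is to count homotopy classes of realizations of a fixed $A$-module $B_2$ using obstruction theory via the dual Adams tower, exactly as set up in Section~\ref{SEC:TodaRealize}. The starting point is the observation, already used in Proposition~\ref{prop:unique}, that two spectra $Z, Z'$ with $H^*(Z) \iso H^*(Z') \iso B_2$ are homotopy equivalent precisely when the class $g \in Ext_A^{0,0}(B_2, B_2)$ corresponding to the identity survives the Adams spectral sequence $Ext_A^{s,t}(B_2, B_2) \Rightarrow [Z, Z']_{t-s}$ and lifts to an equivalence. From Section~\ref{SEC:RealizeZ} (Figure~\ref{fig:tmfSSBB} and Lemma~\ref{lem:4or8}) we know $Ext_A^{s,s-1}(B_2,B_2) = 0$ for $s \geq 3$, while $Ext_A^{2,1}(B_2,B_2)$ has dimension $n \in \{2,3\}$. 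Hence the only obstruction to uniqueness lives in the single group $Ext_A^{2,1}(B_2,B_2)$: a $d_2$-differential $d_2(g) \in Ext_A^{2,1}(B_2,B_2)$ is the sole potential obstruction, and once it vanishes (or once one has corrected for it) no higher differentials can interfere.

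The main body of the argument is therefore a careful bookkeeping of the choices made in building the dual Adams tower realizing $B_2$. First I would fix a minimal free $A$-resolution $\cdots \to F_2 \to F_1 \to F_0 \to B_2$ and recall from the proof of Theorem~\ref{thm:todaR2} (or Theorem~\ref{thm:todaR1}) that a realization $Z$ is built as $Z = \inlim X_r$ where at each stage one chooses a lift $k_{r+1} \colon X_{r+1} \to K_{r+1}$ of the algebraic differential. Since $B_2$ is nonzero only in a finite range of degrees, the tower stabilizes after finitely many stages, and the indeterminacy in the choice of each $k_{r}$ is measured by a map into a GEM, i.e.\ by an element of a cohomology group which by \eqref{eqn:minidentityA}-type identifications is $Ext_A^{r,r-1}(B_2, B_2)$ in the relevant internal degree. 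Because $Ext_A^{s,s-1}(B_2,B_2) = 0$ for all $s \geq 3$, every choice past the second stage is forced up to the equivalences we are allowed to perform, so the entire moduli of realizations is controlled by the choices available at the stage governed by $Ext_A^{2,1}(B_2,B_2)$, which is an $\F_2$-vector space of dimension $n$. Distinct choices there that differ by a nonzero class produce non-equivalent spectra, because such a difference obstructs the existence of a cohomology isomorphism being realized topologically (the $d_2$ in Proposition~\ref{prop:unique}'s spectral sequence is exactly this difference class), while choices differing by $0$ give equivalent spectra. This yields a free transitive action of $Ext_A^{2,1}(B_2,B_2) \cong (\F_2)^n$ on the set of homotopy types, hence exactly $2^n$ of them.

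To make this precise I would prove two complementary statements. For the \textbf{upper bound} ($\leq 2^n$): given any two realizations $Z, Z'$, run the Adams spectral sequence $Ext_A^{s,t}(H^*Z', H^*Z) \Rightarrow [Z,Z']_{t-s}$; the identity class $g$ in filtration $(0,0)$ can only support a $d_2$ into $Ext_A^{2,1}(B_2,B_2)$ and this class is an invariant that depends only on the pair $(Z, Z')$; if it vanishes then (since there are no targets in higher filtration) $g$ survives to an equivalence. Composing, one checks the resulting invariant is additive, so the number of homotopy types injects into $Ext_A^{2,1}(B_2,B_2)$. For the \textbf{lower bound} ($\geq 2^n$): for each class $\alpha \in Ext_A^{2,1}(B_2,B_2)$ I would modify the realization procedure by altering the lift $k_2 \mapsto k_2 - (\text{a map representing } \alpha)$ — an alteration of exactly the type permitted in the proof of Theorem~\ref{thm:todaR2} since the target is a GEM — obtaining a spectrum $Z_\alpha$ with $H^*(Z_\alpha) \iso B_2$, and then show the pairwise invariant of $(Z_0, Z_\alpha)$ is precisely $\alpha$, so these $2^n$ spectra are pairwise inequivalent. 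The hard part will be the lower bound: one must verify that altering the tower by $\alpha$ genuinely changes the homotopy type and that the assignment $\alpha \mapsto Z_\alpha$ realizes the full indeterminacy rather than collapsing — i.e.\ that the $d_2$-obstruction class of $(Z_0, Z_\alpha)$ is exactly $\alpha$ and not, say, $2\alpha = 0$. This is a naturality computation comparing the two dual Adams towers and tracking how the alteration propagates; it is where the explicit structure of the resolution and the vanishing $Ext_A^{s,s-1}(B_2,B_2) = 0$ for $s\ge 3$ (guaranteeing no cancellation at higher stages) are both essential.
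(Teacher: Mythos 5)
Your proposal takes essentially the same approach as the paper: fix a minimal dual Adams tower for one realization $Z$, alter the map $k_2$ by a cocycle representing a class $\overline{\delta}\in Ext_A^{2,1}(B_2,B_2)$, invoke the vanishing $Ext_A^{s,s-1}(B_2,B_2)=0$ for $s\geq 3$ so that the rest of the tower is forced, and detect inequivalence via the $d_2$-differential on the generator $\overline{\iota}\in Ext_A^{0,0}$ in the Adams spectral sequence converging to $[Z,Y]_*$. The paper carries out precisely the construction you sketch in your ``lower bound'' paragraph, down to the diagram (your equation analogous to the paper's \eqref{diag:nocommute}) showing that the map of dual Adams towers breaks at stage $3$ exactly when $\overline{\delta}\neq 0$, giving $d_2(\overline{\iota})=\overline{\delta}$.

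One remark: you correctly flag that to get \emph{exactly} $2^n$ homotopy types (not merely at least two) one should verify that the $d_2$-obstruction between $Z_{\alpha}$ and $Z_{\beta}$ is $\alpha-\beta$, i.e.\ that the invariant is ``additive'' and the action of $Ext_A^{2,1}$ on realizations is free and transitive. The paper's proof compares every altered $Y$ only against the fixed base realization $Z$ and then asserts the count; your proposal is a bit more explicit about the bookkeeping that would nail this down, though neither you nor the paper fully writes it out. This is not a gap in your reasoning so much as a place where both arguments rely on the reader supplying a short naturality check.
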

\begin{proof}
By Theorem~\ref{thm:todaR2}, we know that there exists at least one spectrum $Z$ that realizes $B_2$. Fix a minimal dual Adams tower $\set{Z_i: i \geq 0}$ for $Z$. The dual Adams tower for $Z$ fits into the diagram 
\[ 
\xymatrix{ 
Z_0 = \ast \ar[d]_{k_0} & \ar[l]_{i_0} Z_1 \ar[d]_{k_1}& \ar[l]_{i_1} Z_2 \ar[d]_{k_2}  & \ar[l]_{i_2} \dots & \ar[l] Z_r \ar[d]_{k_r} & \ar[l]_{i_r} \dots \\
K_0 \ar@{-->}[ur]^{s_0} & K_1 \ar@{-->}[ur]^{s_1} & K_2 \ar@{-->}[ur]^{s_2} & \dots & K_r \ar@{-->}[ur]^{s_r} & \dots
}
\]  
where $K_r$ is a GEM. The spectrum $Z$ is then the limit
 \[ Z = \inlim Z_i.\]

To create another spectrum $Y$ realizing $B_2$, we alter $k_2$ using a nonzero element $\overline{\delta} \in Ext_{A}^{2,1}(B_2, B_2)$ in such a way that  the composite 
  \[ d_r: K_r \overset{s_r} \to \Sigma Z_{r+1} \overset{k_{r+1}} \to \Sigma K_{r+1} \]
remains fixed for all $r \geq 0$. Then we argue that $Y$ and $Z$ are not weakly equivalent. 
 
For an element $\overline{\delta} \in Ext_{A}^{2,1}(B_2,B_2)$, choose a cocycle representative  
\[ \delta^*: \Sigma F_2^* \to B_2.\]
Since $\delta^*$ is a cocycle, the composite 
\[ \Sigma F_3^* \overset{d^2} \to \Sigma F_2^*  \overset{\delta^*} \to M\]
is trivial. Note that $Z_1 = \Sigma^{-1} K_0$ and $k_1 = d_0$, therefore unwinding the long exact sequence associated to the fiber sequence
\[ Z_2 \overset{i_1} \to Z_1 \overset{k_1} \to K_1\]
gives us the diagram
\begin{equation} \label{eqn:changek2}
\xymatrix{
&\Sigma F_2^*\ar[dr]_{\delta^*} \ar@{-->}[r]^-{\Delta^*}&F_0^* \iso H^{*}(Z_1) \ar[dr]^{i_1^*}\ar@{->>}[d]& \\
&0 \ar[r] & M \ar[r]_{p_2} & H^*(Z_2) \ar[dr]_{k_1^*}\ar[r] & \Sigma^{-1}\ker d^0\ar[d] \ar[r] & 0 \\
&&&& H^{*+1}(K_1),
}
\end{equation}
where the horizontal row is  exact (compare with~\eqref{eqn:split2}). Let $\Delta^*$ denote a lift of $\delta^*$, which exists as $\Sigma F_2^*$ is a free $A$-module. 

Now we build a dual Adams tower $\set{Y_i: i \geq 0}$ which fits into the diagram 
\[ 
\xymatrix{ 
Y_0 = \ast \ar[d]_{l_0} & \ar[l]_{j_0} Y_1 \ar[d]_{l_1}& \ar[l]_{j_1} Y_2 \ar[d]_{l_2}  & \ar[l]_{j_2} \dots & \ar[l] Y_r \ar[d]_{l_r} & \ar[l]_{j_r} \dots \\
K_0 \ar@{-->}[ur]^{t_0} & K_1 \ar@{-->}[ur]^{t_1} & K_2 \ar@{-->}[ur]^{t_2} & \dots & K_r \ar@{-->}[ur]^{t_r} & \dots
}
\]  
as follows. Define $Y_i := Z_i$ for $0 \leq i \leq 2$ and  let $l_i = k_i$ for $0 \leq i \leq 1$. Let
\[ l_2:Y_2 \to K_2\] 
 be the map that classifies 
\[ \tilde{k}_2^* + p_2 \circ \delta^*:\Sigma F_2^* \to H^{*}(\tilde{Z}_2).\]
The condition that $ \delta^*$  is a cocycle  guarantees that $d_3 \circ \tilde{l}_2 \simeq 0$. By construction of $l_2$, we have 
\begin{equation} \label{diag:nocommute}  
\xymatrix{
 Y_2 = Z_2 \ar[d]_{l_2 -k_2}\ar[r]^{j_1 =i_1} & Y_1 = Z_1 \ar[dl]_{\Delta}\\
 K_2.
}
\end{equation}
Note that $\Delta \not\simeq 0$ as $\overline{\delta} \neq 0$. However, $(l_2 -k_2) \circ t_1 = \Delta \circ j_1 \circ t_1 = 0$ which means  
\[ l_2 \circ t_1 = k_2 \circ s_1 = d_1.\]
Build the rest of the tower $\{Y_i\}$ in the usual way described in the proof of Theorem~\ref{thm:todaR2}, and let $Y$ denote the limit 
\[ Y := \inlim Y_i.\]
The choices for $l_i$ for $i \geq 3$ will not make any difference as 
\[ Ext_{A}^{s,s-1}(B_2, B_2) = 0\]
for $s \geq 3$.
 
Note that 
\[ Hom_{A}(H^{*}(Y), H^*(Z)) \iso Ext_{A}^{0,0}(H^{*}(Y), H^{*}(Z)) = Ext_{A}^{0,0}(B_2, B_2) = \Ft. \]
Therefore there is exactly one $A$-module map $\iota_*:H^{*}(Y) \to H^*(Z)$ which is an isomorphism. Let $\overline{\iota}$ denote the corresponding element in  $Ext_{A}^{0,0}(H^{*}(Y), H^{*}(Z))$. To show that $Y$ and $Z$ are not weakly equivalent it suffices to show that $\overline{\iota}$ is not a permanent cycle in the Adams spectral sequence 
\begin{equation} \label{eqn:ASS}
E_2^{s,t} = Ext_{A}^{s,t}(H^{*}(Y), H^{*}(Z)) \Rightarrow [Z,Y]_{t-s}.
\end{equation}
Indeed, assume $\overline{\iota}$ is a permanent cycle. Then there will be a map of spectra
\[ \iota: Z \to Y\]
which induces $\iota_*$ on cohomology and we will have a map of dual Adams towers
\[ 
\xymatrix{
Z \ar[r]\ar[d]_{\iota} & \ldots \ar[r] & Z_3 \ar[r] \ar@{-->}[d]^{\iota_3} &Z_2 \ar[r] \ar@{-->}[d]^{\iota_2} & Z_1 \ar@{-->}[d]^{\iota_1} \\  
Y \ar[r] & \ldots \ar[r]& Y_3 \ar[r] &Y_2 \ar[r] & Y_1 \\
}
\]
Note that $\iota_1$ and $\iota_2$ exist trivially as $Z_1 = Y_1$ and $Z_2 = Y_2$. However, the diagram 
 \[ \xymatrix{
Z_3 \ar@{-->}[d]_{\iota_3} \ar[r] & Z_2 \ar@{=}[d] \ar@{}[dr] |{\mathstrut\raisebox{0ex}{ \ \ \textcolor{red}{$\circlearrowright$}\kern1.4em}} \ar[r]^{k_2} & K_2 \ar@{=}[d] \\
Y_3 \ar[r] & Y_2 \ar[r]_{l_2} & K_2
}
\]
shows that the map $\iota_3: Z_3 \to Y_3$ exists if and  only if the right square commutes because the horizontal rows are cofiber sequences. But~\eqref{diag:nocommute} implies that the right square does not commute. Therefore $\iota_3$ does not exist and we have a $d_2$ differential 
\[ d_2(\overline{\iota}) = \overline{\delta}\]
in the Adams spectral sequence \eqref{eqn:ASS}, hence a contradiction.

Thus we get exactly one homotopy type realizing $B_2$ for each element in\linebreak$Ext_A^{2,1}(B_2,B_2)$ and the result follows.    
\end{proof}
\begin{rem} \label{rem:difference} Let $Z_1$ and $Z_2$ denote two spectra such that 
\[ H^*(Z_1) \iso H^*(Z_2) \iso B_2.\]
From the arguments in the proof of Theorem~\ref{thm:nonunique}, we see that in the Adams spectral sequence 
\[ E_2^{s,t} = Ext_{A}^{s,t}(H^{*}(Z_2), H^*(Z_1)) \Rightarrow [Z_1,Z_2]_{t-s}\]
the generator $\overline{\iota}$ in bidegree $(0,0)$ supports a nontrivial $d_2$ differential unless $Z_1 \simeq Z_2$. Therefore the $d_2$ differential in some sense `measures the difference' in homotopy types between the spectra $Z_1$ and $Z_2$. 

\end{rem}

\section{Existence of a $v_2^1$-self-map of $Z$} \label{SEC:mainproof}
As usual, let $B_2$ denote an $A$-module whose underlying $A(2)$-module structure is $B(2)$. By Theorem~\ref{thm:nonunique} and Lemma~\ref{lem:4or8}, we know that there are either four or eight different homotopy types of spectra which realize $B_2$. Let $Z$ denote a spectrum of a specific homotopy type which realizes $B_2$. In Figure~\ref{fig:tmfSSBBB} we lay out the $E_1$-page of the alg-$\tmf$ SS for $DZ \sma Z$. 

\begin{figure}[H] 
  \[ \begin{sseq}[entrysize=8mm, grid = chess]{-2...6}{5}
  \ssmoveto {5} 3
  \green \ssdrop{\circ} \ssname{a} \black
  \ssmoveto {-2} {0}
  \ssdrop{\ast} 
  \ssmoveto {-1} {0}
  \ssdrop{\ast}
  \ssmoveto{-2} 1
  \ssdrop{\ast} 
  \ssmoveto{-1} 1
  \ssdrop{\ast} 
  \ssmoveto{0} 1
  \ssdrop{\ast} 
  \ssmoveto{1} 1
  \ssdrop{\ast} 
  \ssmoveto{2} 1 \ssdrop{\ast} \ssmoveto{3} 1 \ssdrop{\ast} \ssmoveto{4} 1 \ssdrop{\ast} \ssmoveto{5} 1 \ssdrop{\ast}
  \ssmoveto{6} 1  \ssdrop{\ast} 
  \ssmoveto {0} {0}
  \ssdropbull\ssdroplabel[R]{\overline{\iota}}  \vtwo \ssdroplabel[D]{\overline{v}_2}
  \ssmoveto {-2} 2 
  \ssdrop{\ast}
  \ssmoveto {-1} 2 
  \red \ssdrop{\circ} \black \ssdropbull \vtwo 
  \ssmoveto {-1} 2
   \ssdropbull \vtwo 
  \ssmoveto {0} 2
  \ssdrop{\ast}
  \ssmoveto {1} 2
  \ssdrop{\ast}
  \ssmoveto {2} 2
  \ssdrop{\ast}
  \ssmoveto {3} 2
  \ssdrop{\ast}
  \ssmoveto {4} 2
  \ssdrop{\ast}
  \ssmoveto {5} 2
  \ssdrop{\ast}
  \ssmoveto {6} 2
  \red \ssdropbull \ssname{g} \black
  \ssdrop{\ast} 
  
  \ssmoveto {2} 3
  \ssdrop{\ast}
  \ssmoveto {3} 3
  \ssdrop{\ast}
  \ssmoveto {4} 3
  \ssdrop{\ast}
  \ssmoveto {6} 3
  \ssdrop{\ast}

  \ssgoto g \ssgoto a \ssstroke[arrowto]
  \end{sseq} \]
\caption{ The $E_1$-page of the alg-$\tmf$ SS for $DB_2 \otimes B_2$} 
\label{fig:tmfSSBBB}
\end{figure}


As in Figure~\ref{fig:tmfSS} we use colors to distinguish $\tmf$-filtration, with $\ast$ in bidegrees which are nonzero but irrelevant to the discussion. In Theorem~\ref{thm:E2}, we have established that $v_2 \cdot \overline{\iota}$ is a nonzero permanent cycle in the alg-$\tmf$ SS denoted by $\overline{v}_2 \in Ext_{A}^{1,7}(H^*(Z), H^*(Z))$. 
\begin{proof}[Proof of Main~Theorem~\ref{main:v2}]
From Figure~\ref{fig:tmfSSBBB} it is clear that 
\[ Ext_{A}^{s,s+5}(H^{*}(Z), H^*(Z)) =0 \]
for all $s \geq 4$. The task of proving the Main~Theorem~\ref{main:v2} then boils down to eliminating the possibility of $\overline{v}_2$ supporting a nontrivial $d_2$ differential in the Adams spectral sequence 
\begin{equation} \label{eqn:AdamsSS}
 E_2^{s,t} = Ext_{A}^{s,t}(H^{*}(Z), H^*(Z)) \Rightarrow [Z,Z]_{t-s}.
\end{equation}
 Figure~\ref{fig:tmfSSBBB} makes it clear that in the $E_1$ page of alg-$\tmf$ SS for $DZ \sma Z$
 \[ E_1^{3,8,*} = \set{v_2^3 \cdot \overline{g}_{\xi_1^7\xi_2^2}, v_2^3 \cdot \overline{g}_{\xi_1^4 \xi_2^3}, h_3^3 \cdot \overline{g}_{\xi_1^7 \xi_2^3} }.\] 
The element $h_3^3 \cdot \overline{g}_{\xi_1^7 \xi_2^4}$, which has $\tmf$-filtration $3$, is the target of a differential in the alg-$\tmf$ SS due to \eqref{eqn:diffsigma3}, hence trivial in the $E_{2}$-page of \eqref{eqn:AdamsSS}. However, the elements $v_2^3 \cdot \overline{g}_{\xi_1^7\xi_2^2}$ and $v_2^3 \cdot \overline{g}_{\xi_1^4 \xi_2^3}$ are nonzero permanent cycles in the alg-$\tmf$ SS as they are in $\tmf$-filtration $0$, hence represent nonzero elements in the $E_2$-page of \eqref{eqn:AdamsSS}. 

Since $H^*(k(2)) = A\modmod E(Q_2)$, the $E_2$-page of the Adams spectral sequence computing $k(2)_*(DZ \sma Z)$ is
\[ Ext_{E(Q_2)}^{*,*}(H^*(Z), H^*(Z)) \iso Ext_{E(Q_2)}^{*,*}(B(2), B(2))\]
which we have already computed (see Lemma~\ref{lem:computations}). Now consider the map of spectral sequences 
\begin{equation} \label{eqn:mapofSS}
\xymatrix{
Ext_{A}^{s,t}(H^{*}(Z), H^*(Z)) \ar[d]_{k_{\ast}}  \ar@{=>}[r] & \pi_{t-s}(DZ \sma Z) \ar[d]^{k}\\
Ext_{A(2)}^{s,t}(H^{*}(Z), H^*(Z)) \ar[d]_{l_{\ast}}  \ar@{=>}[r] & \tmf_{t-s}(DZ \sma Z) \ar[d]^{l} \\
Ext_{E(Q_2)}^{s,t}(H^*(Z), H^*(Z)) \ar@{=>}[r] & k(2)_{t-s}(DZ \sma Z)
}
\end{equation}
induced by the maps $ S^0 \overset{k}\to \tmf \overset{l}\to k(2)$. The elements $v_2^i \cdot \overline{\iota}$, $v_2^i \cdot \overline{g}_{\xi_1^7\xi_2^2}$ and $v_2^i \cdot \overline{g}_{\xi_1^4, \xi_2^3}$ for $i \geq 0$ have nonzero image  under the map $k_*$ as they are in $\tmf$-filtration zero. Moreover, they have nonzero image under the composite $(l \circ k)_*$ by Lemma~\ref{lem:computations}. 
From Remark~\ref{rem:difference}, it is clear that 
\[ d_2(\overline{\iota}) = 0\]
 in the upper-most spectral sequence in \eqref{eqn:mapofSS}. Therefore,  
\[ d_2 (k_{*}(l_{*}(\overline{\iota})) = 0.\]
Since $k(2)$ admits a $v_2$-self-map, the differentials in the bottom-most spectral sequence of \eqref{eqn:mapofSS} are $v_2$-linear, which means  
\[ d_2 (v_2 \cdot k_{*}(l_{*}(\overline{\iota})) = 0.\]

Now, suppose that 
\[ d_2(\overline{v}_2) = c_1 v_2^3 \cdot \overline{g}_{\xi_1^7\xi_2^2} + c_2 v_2^3 \cdot \overline{g}_{\xi_1^4 \xi_2^3} \]
with $(c_1,c_2) \neq (0,0)$. This would imply
\[ d_{2}(v_2 \cdot k_{*}(l_{*}(\overline{\iota})) \neq 0\] 
as $v_2^3 \cdot \overline{g}_{\xi_1^7\xi_2^2}$ and $v_2^3 \cdot \overline{g}_{\xi_1^4 \xi_2^3}$ have nontrivial images under the map $k_* \circ l_*$, which is a contradiction. Thus we can conclude 
 \[ d_2(\overline{v}_2) = 0\]
and $Z$ admits a $v_2^1$-self-map.
\end{proof}
\begin{rem}[$v_2$-maps]
Now let $Z_1$ and $Z_2$ be spectra realizing $B_2$. Then 
\begin{equation} \label{eqn:diffunit}
 d_2(\overline{\iota}) = c_1 v_2^2 \cdot \overline{g}_{\xi_1^7\xi_2^2}+ c_2 v_2^2 \cdot \overline{g}_{\xi_1^4\xi_2^3} +c_3 h_3^2 \cdot \overline{g}_{\xi_1^6 \xi_2^3}  
\end{equation}
where $c_i \in \Ft$. Essentially using the argument in the proof of Main~Theorem~\ref{main:v2}, we see that there are three possibilities:
\begin{enumerate}
 \item if $c_1=c_2=c_3=0$, then $Z_1$ and $Z_2$ have the same homotopy type, which we call $Z$ and there exists a $v_2^1$-self-map\[v:\Sigma^6Z\to Z,\]
 \item if $c_1=c_2=0$ but $c_3\neq0$, then $Z_1$ and $Z_2$ have different homotopy types, but we nonetheless have a map\[v:\Sigma^6Z_1\to Z_2\]which induces multiplication by $v_2^1$ on Morava $K$-theory, and
 \item if one of $c_1$ or $c_2$ is nonzero, then there is no map\[v:\Sigma^6Z_1\to Z_2\]which induces multiplication by $v_2^1$ on Morava $K$-theory.
\end{enumerate}
 
\end{rem}

\newpage
\appendix
\section{An explicit $A$-module structure on $Z$} \label{SEC:Appendix1}
The $A(2)$-module $B(2)$ is $32$-dimensional as an $\Ft$-vector space spread across from degree $0$ to  degree $16$. 
Endowing $B(2)$ with an $A$-module structure is therefore not easy in practice and requires a systematic approach. In Section~\ref{SEC:Amodule}, we established that any $A$-module structure on $B(2)$ extends to an $A$-module structure on $A(2)$ (see Theorem~\ref{thm:E2}). Hence, we can obtain all possible $A$-module structures on $B(2)$ from $A$-module structures on $A(2)$. Marilyn Roth \cite{Roth} showed that there are $1600$ $A$-module structures on $A(2)$. However, she did not list explicit $A$-module structures or $A_*$-comodule structures. Rather, she encoded these structures in $\mathbb{F}_2$-linear maps
\[ s: A(2)_* \to (A \modmod A(2))_*\]
which satisfy certain criteria. Each $s$-map leads to a unique $A$-module structure on $A(2)$, and Roth showed that there are $1600$ such maps. The purpose of this Appendix is to review Roth's work and demonstrate, via an example, how to obtain different $A$-module structures on $B(2)$ in practice. We will express the $A$-module in the format required by Bruner's Ext software \cite{Bru}  and display the output of the program. 

We begin by describing the recipe for converting an $s$-map into an $A$-module structure on $A(2)$ as prescribed in \cite{Roth}. It is useful to dualize things, considering not $A$-modules, but $A_*$-comodules. Recall that the dual Steenrod algebra $A_*$ is the polynomial algebra
\[A_*=\Ft[\xi_i:i\geq1] \]
where the generator $\xi_i$ is in degree $2^i-1$. As $A(2)\subset A$ is a sub-Hopf algebra generated by $Sq^1$, $Sq^2$ and $Sq^4$, the dual $A(2)_*$ is the quotient algebra
\[ A(2)_* := \Ft[\xi_1, \xi_2, \xi_3]/ ( \xi_1^{8}, \xi_2^4, \xi_3^2 ) \]
of $A_*$, and 
\[ (A \modmod A(2))_* = \Ft[\xi_1^8, \xi_2^4, \xi_3^2, \xi_4, \xi_5, \dots ].\]
Notice that as a graded $\Ft$-vector space, $A(2)_*$ has generators in degrees $0 \leq t \leq 23$. Between degrees $0$ and $23$, the graded $\Ft$-vector space $(A \modmod A(2))_*$ has exactly one nonzero generator in degrees $8$, $12$, $14$, $15$, $20$, $22$ and $23$. In those same degrees, $A(2)_*$ has $3$, $4$, $4$, $3$, $2$, $1$ and $1$ generators respectively. As a result there are $2^{18}$ different $s$-maps. An $s$-map can be uniquely extended to an $\overline{s}$-map
\[ \overline{s}: A_* \to (A\modmod A(2))_*\]
as $A_* = A(2)_* \otimes (A \modmod A(2))_*$. Corresponding to each $s$-map, there is a $j$-map
\[ j: A(2)_* \to A_* \]
 which is defined as follows. Let $\pi: A_* \to A(2)_*$ be the canonical projection and $i:(A\modmod A(2))_* \to A_*$ be the canonical inclusion. Define the $j$-map inductively using the formula 
\begin{equation} \label{eqn:jmap}
 j(a) = a + s(a) + \sum_{i} j(\pi(a_i')) \cdot \overline{s}(i(a_i''))
\end{equation}
where  $a_i'$ and $a_i''$ are part of the formula for the coproduct on $a$ 
\[\psi(a) = a \otimes 1 + 1 \otimes a + \sum_i a_i' \otimes a_i''.\]
 Roth proved that \cite[Chapter III]{Roth} 
\[A(2)_*\overset{j}{\to}A_*\overset{\psi}{\to}A_*\otimes A_*\overset{A_*\otimes\pi}{\to}A_*\otimes A(2)_*\]
defines an $A_*$-comodule structure on $A(2)_*$ if and only if the composite 
\begin{equation} \label{eqn:rothcriteria}
 A(2)_* \overset{j} \to A_* \overset{\psi}\to A_* \otimes A_* \overset{\overline{s} \otimes \overline{s}} \to (A\modmod A(2))_* \otimes (A\modmod A(2))_* \end{equation}
sends $\xi_1^7 \xi_2^3 \xi_3 \mapsto 0$. Roth listed all the $s$-maps which satisfy the criteria \eqref{eqn:rothcriteria}, and found that there are $1600$ such $s$-maps. 

One can obtain the $A$-module structure on $A(2)$ simply by dualizing the $A_*$-comodule structure on $A(2)_*$, however, in practice, this is quite tedious. Instead, one can obtain the $A$-module structure on $A(2)$ directly from the $j$-map using the right action of the total squaring operation 
\[ Sq = \sum_{i \geq 0} Sq^i: A_* \to A_*.\] 
Note that for a left $A$-module $M$, its dual $\widehat{M}= Hom_{\Ft}(M, \Ft)$ admits a right action of $A$. In particular, the right action of $A$ on its dual $A_*$ is determined by the right action of the total squaring operation $Sq$ on $A_*$. It is well-known that the the right action of $Sq$ is a ring homomorphism determined by the formula 
\[ (\xi_i)Sq = \xi_i + \xi_{i-1}.\]
Given a $j$-map we obtain the $A$-module structure on $A(2)$ as follows. We declare \[ Sq^i(x) = x_1 + \dots + x_n \]
where $x, x_1, \dots, x_n \in A(2)$, if $(j(x_{i*}))Sq^i$ contains $j(x_*)$ as a summand. After obtaining the $A$-module structure on $A(2)$ one must check if it satisfies~\eqref{cond:A}. If so, one can easily get an $A$-module structure on $B(2)$ by considering the inclusion map $i_2$ or the quotient map $q_2$ of \eqref{eqn:exactA}.
\subsection{$A$-module definition file for Bruner's program}
We now consider the sample $j$-map that Roth computed \cite[Pg 30, Chapter III]{Roth} and check that the resulting $A$-module structure on $A(2)$ satisfies \eqref{cond:A}. We will now express the resulting $A$-module structure on $B(2)$ as a definition file for Bruner's program \cite{Bru}.  The $A$-module structure is encoded in a text file named \texttt{Z} in a way that we will now describe. The first line of the text file \texttt{Z} consists of a positive integer $n$, the dimension of $B(2)$ as an $\Ft$-vector space, whose basis elements we will call $g_0,\ldots ,g_{n-1}$. The second line consists of an ordered list of integers $d_0,\ldots,d_{n-1}$, which are the respective degrees of the $g_i$. Every subsequent line in the text file describes a nontrivial action of some $Sq^k$ on some generator $g_i$. For instance, if we have\[Sq^k(g_i)=g_{j1}+\cdots +g_{jl},\]we would encode this fact by writing the line\[\texttt{i k l j1 \ldots jl}\]followed by a new line. Every action not encoded by such a line is assumed to be trivial. The text file \texttt{Z} is as follows.
\begin{figure}[H] 
\begin{small}
\begin{verbatim}
32

0 1 2 3 3 4 4 5 5 6 6 6 7 7 7 8 8 9 9 9 10 10 10 11 11 12 12 13 13 14
15 16

0 1 1 1
0 2 1 2
0 3 1 3
0 4 1 5
0 5 1 7
0 6 1 9
0 7 1 12
0 10 1 20
0 12 1 25
0 13 1 27
0 14 1 29

1 2 2 3 4
1 3 1 6
1 4 2 7 8
1 5 1 10
1 6 2 12 13
1 7 1 15
1 8 1 17
1 9 1 20
1 12 1 27
1 14 1 30
1 15 1 31

2 1 1 3
2 2 1 6
2 4 3 9 10 11
2 5 2 12 14
2 6 2 15 16
2 7 1 18
2 8 1 21
2 9 1 23
2 10 1 25
2 11 1 27
2 12 1 29


3 2 1 8
3 3 1 10
3 4 2 12 14
3 6 3 17 18 19
3 7 2 20 22
3 8 2 23 24
3 9 1 26
3 10 2 27 28
3 11 1 29
3 12 1 30
3 13 1 31
\end{verbatim}
\end{small}
\end{figure}
\clearpage
\begin{figure}[H] 
\begin{subfigure}{.33\textwidth}
\begin{small}
\begin{verbatim}
4 1 1 6
4 2 1 8
4 3 1 10
4 4 1 13
4 5 1 15
4 6 1 17
4 7 1 20
4 12 1 30
4 13 1 31

5 1 1 7
5 2 2 9 10
5 3 1 12
5 4 1 16
5 5 1 18
5 6 2 20 22
5 12 1 31

6 2 1 10
6 4 1 15
6 6 1 20
6 12 1 31

7 2 2 12 13
7 3 1 15
7 4 2 17 18
7 5 1 20
7 6 1 24
7 7 1 26

8 1 1 10
8 4 2 17 19
8 5 2 20 22
8 6 1 24
8 7 1 26
8 8 1 28
8 9 1 29
8 10 1 30
8 11 1 31

9 1 1 12
9 2 1 15
9 4 2 20 21
9 5 1 23
9 6 2 25 26
9 7 1 27
9 8 1 29

10 4 2 20 22
10 6 1 26
10 8 1 29
\end{verbatim}
\end{small}
\end{subfigure}%
\begin{subfigure}{.34\textwidth}
\begin{small}
\begin{verbatim}
10 10 1 31

11 1 1 14
11 2 1 16
11 3 1 18
11 4 1 21
11 5 1 23
11 6 1 25
11 7 1 27
11 10 1 31

12 2 1 17
12 3 1 20
12 4 1 23
12 6 2 27 28
12 7 1 29
12 8 1 30
12 9 1 31

13 1 1 15
13 2 1 17
13 3 1 20
13 4 1 24
13 5 1 26

14 2 2 18 19
14 3 1 22
14 4 2 23 24
14 5 1 26
14 6 2 27 28
14 7 1 29
14 8 1 30
14 9 1 31

15 2 1 20
15 4 1 26

16 1 1 18
16 2 1 22
16 4 2 25 26
16 5 1 27
16 6 1 29

17 1 1 20
17 4 1 28
17 5 1 29
17 6 1 30
17 7 1 31

18 2 1 24
18 3 1 26
\end{verbatim}
\end{small}
\end{subfigure}%
\begin{subfigure}{.33\textwidth}
\begin{small}
\begin{verbatim}
18 4 1 27
18 6 1 30
18 7 1 31

19 1 1 22
19 2 1 24
19 3 1 26
19 4 1 28
19 5 1 29
19 6 1 30
19 7 1 31

20 4 1 29
20 6 1 31

21 1 1 23
21 2 2 25 26
21 3 1 27
21 6 1 31

22 2 1 26
22 4 1 29
22 6 1 31

23 2 2 27 28
23 3 1 29
23 4 1 30
23 5 1 31

24 1 1 26
24 4 1 30
24 5 1 31

25 1 1 27
25 2 1 29
25 4 1 31

26 4 1 31

27 2 1 30
27 3 1 31

28 1 1 29
28 2 1 30
28 3 1 31

29 2 1 31

30 1 1 31


\end{verbatim}
\end{small}
\end{subfigure}
\end{figure}
\subsection{Ext charts produced by Bruner's program}
Using Bruner's program,  we compute $Ext_{A}^{*,*}(B_2, \Ft)$ (see Figure~\ref{Z_1}) where $B_2$ is the $A$-module structure on $B(2)$ that we computed above. 
\begin{figure}[h]
 \centering
 \includegraphics[page=1,width=\textwidth]{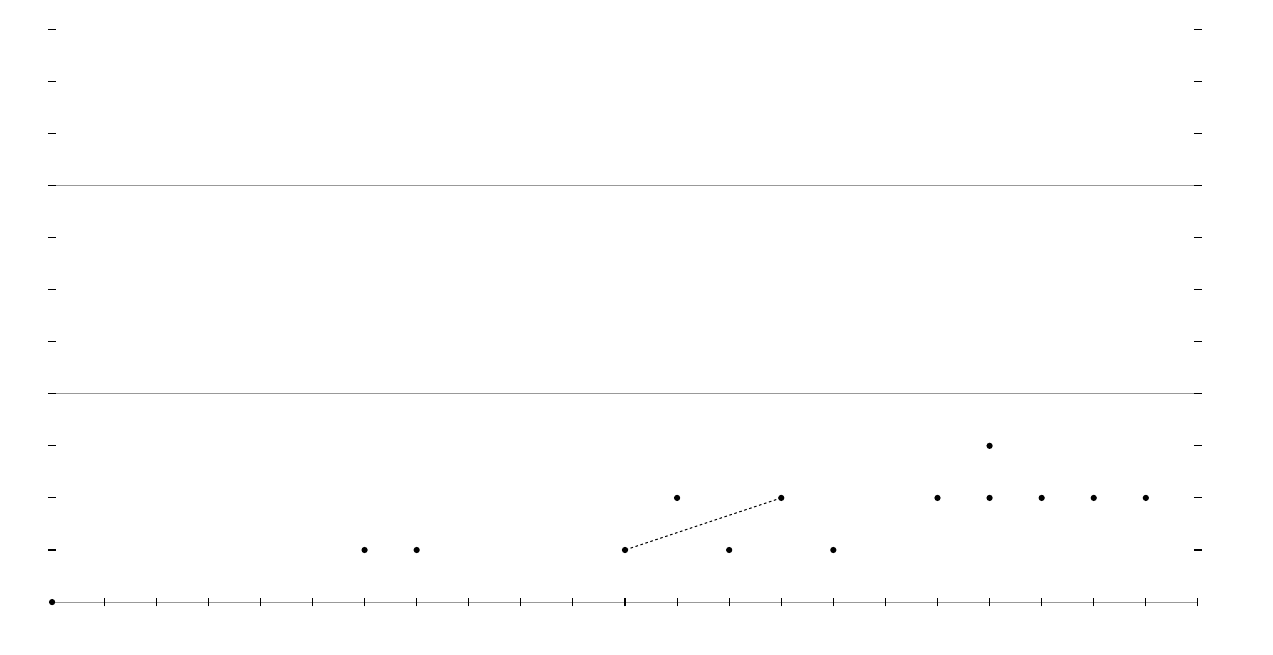}
 \caption{The file \texttt{Z\_1.pdf} displaying $Ext_{A}^{s,t}(B_2, \Ft)$ for \\ $0 \leq t-s \leq 22.$}
\label{Z_1}
\end{figure}
Bruner's program is able to compute the $A$-module structure for Spanier-Whitehead duals and tensor products of $A$-modules. Using these features we produce an $A$-module definition file for $B_2 \otimes DB_2$ which we named \texttt{ZDZ}. This allows us to compute $Ext_{A}^{*,*}(B_2,B_2)$ (see Figure~\ref{ZDZ_1}) easily using Bruner's program.  We use the chart in Figure~\ref{ZDZ_1} to make our conclusions in Remark~\ref{rem:fourrealizations}.
\begin{figure}[h]
 \centering
 \includegraphics[page=1,width=\textwidth]{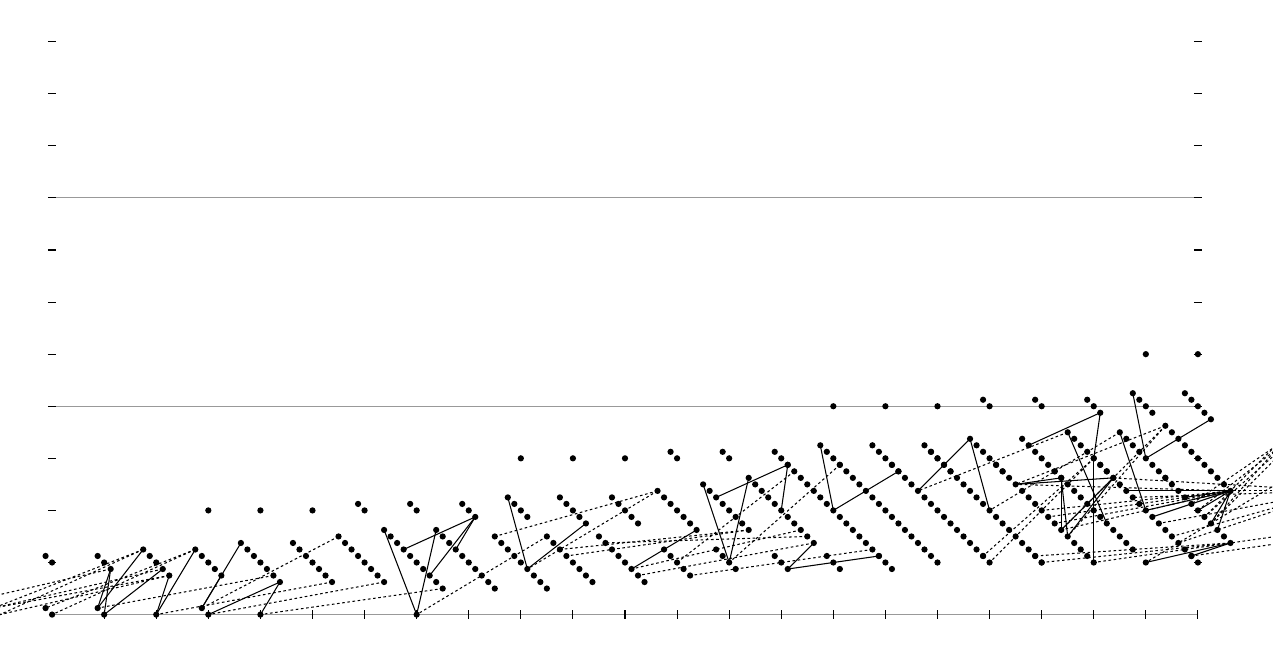}
 \caption{The file \texttt{ZDZ\_1.pdf} displaying $Ext_{A}^{s,t}(B_2, B_2)$ for \\ $-7 \leq t-s \leq 15.$}
\label{ZDZ_1} 
\end{figure}

\end{document}